\newcommand{\N}{\ensuremath{\mathbb{N}}}
\newcommand{\C}{\ensuremath{\mathbb{C}}}
\newcommand{\dx}{\mathrm{d}}
\newcommand{\e}{\mathrm{e}}
\newcommand{\R}{\mathbb{R}}
\newtheorem{theorem}{Theorem}
\newtheorem{corollary}[theorem]{Corollary}
\newtheorem{lemma}[theorem]{Lemma}
\newtheorem{proposition}[theorem]{Proposition}
\newtheorem{definition}[theorem]{Definition}
\newtheorem{example}[theorem]{Example}
\newtheorem{algorithm}[theorem]{Algorithm}
\newtheorem{remark}[theorem]{Remark}
\renewcommand*\env@cases[1][1.2]{%
  \let\@ifnextchar\new@ifnextchar
  \left\lbrace
  \def\arraystretch{#1}%
  \array{@{}l@{\quad}l@{}}%
}
\newenvironment{Lemma}{\goodbreak\begin{lemma}}{\end{lemma}}
\newenvironment{Theorem}{\goodbreak \begin{theorem}}{\end{theorem}}
\providecommand{\keywords}[1]{\textbf{\textit{Keywords: }} #1}
\author{Frank Filbir,  Kristof Schr\"{o}der\footnote{Corresponding author: kristof.schroeder@helmholtz-muenchen.de}}
\affil{\small Department of Mathematics, Technische Universit\"at M\"unchen (Germany),
\\
Scientific Computing Research Unit, Helmholtz Center Munich (Germany)}
\title{Exact Recovery of Discrete Measures from Wigner D-Moments}
\date{}
\begin{document}
\maketitle
\begin{abstract}
In this paper, we show the possibility of recovering a sum of Dirac measures on the rotation group $SO(3)$ from its low degree moments with respect to \emph{Wigner D-functions} only. 

The main Theorem of the paper, Theorem \ref{exact:recmain}, states, that exact recovery from moments up to degree $N$ is possible, if the support set of the measure obeys a separation distance of $\frac{36}{N+1}$. In this case, the sought measure is the unique solution of a \emph{total variation} minimization. The proof of the uniqueness of the solution is  in the spirit of the work of Cand\'es and Fernandez-Granda \cite{Candes:2014} and requires \emph{localization estimates} for interpolation kernels and corresponding \emph{derivatives} on the rotation group $SO(3)$ with \emph{explicit} constants.  
\end{abstract}
\keywords{Super-resolution, Rotation Group, Wigner D-functions, Signal recovery, TV-Minimization}

\section{Introduction}
The group of all rotation matrices in dimension three plays a crucial role in various applications ranging from crystallographic texture analysis, see \cite{Bunge:1982}, \cite{Hielscher:2008},\cite{Hielscher:2007}, \cite{Schaeben:2003}, over the calculation of magnetic resonance spectra \cite{Stevensson:2011} to applications in biology such as protein-protein docking, see \cite{Candas:2005}, \cite{Bajaj:2013}, \cite{Kovacs:2003}. For a good overview regarding applications see also \cite{EngAppl:2000}. 

Signals or functions on the rotation group $SO(3)$ are often analysed with respect to a harmonic basis arising from representation theory of the group, the so called \emph{Wigner D-functions}. Since these functions are also eigenfunctions of the Laplace operator on the manifold $SO(3)$, they can be regarded as a natural analog to Fourier series in the case of the torus group. Also fast algorithms in the spirit of the FFT for analysing and summation with respect to this system of functions have been developed, see \cite{Kostelec:2008}, \cite{Potts:2009}, \cite{Hielscher:2010} and \cite{Keiner:2012}. 

In this paper, we consider the problem of recovering a spatially highly resolved signal, modelled as a sum of point measures on $SO(3)$ denoted by $\mu^{\star}$, from its low order moments with respect to the Wigner D-functions only. This sort of problem has been treated in different geometric settings and with respect to different
systems of functions. 

To give a short overview, we first mention that there is a long list of works on the recovery from \emph{classical Fourier measurements}, i.e. Fourier coefficients for measures on the torus group $\mathbb{T}$ or Fourier transform measurements on the real line. In Section $1.8$ of \cite{Candes:2014}, Cand\'es and Fernandez-Granda give an extensive overview about the work in this direction. Moreover, they show that the sought measure is the unique solution of a \emph{total variation} minimization problem as long as the support of the measure is sufficiently separated, i.e. obey a minimal separation condition of $\frac{2}{N}$, where $N$ denotes the order of available Fourier coefficients. The proportional factor of $2$ is coined \emph{super-resolution factor}.        
 
The corner stone is a measure theoretic proof that involves the construction of a so called \emph{dual certificate}, i.e. a trigonemetric polynomial of degree $N$ that interpolates a given sign sequence on the support of the sought measure and is strictly less than one in absolute value elsewhere. In addition, they formulated the recovery as a tractable optimization problem and studied in \cite{Candes:2013} the robustness of this procedure with respect to noise. Very recently their procedure has been generalized to \emph{Short-Time} Fourier measurements, see \cite{Boelcskei:2015}, and a connection to \emph{Beurling's} theory of minimal extrapolation was shown in \cite{Benedetto:2016}.

A different line of work regards different geometric settings and different moments like an interval or the whole line with polynomial or generalized moments, see \cite{Castro:2012}, \cite{Bendory:2014}, or semi-algebraic domains \cite{deCastro:2015}. In \cite{Bendory:2015} and \cite{Bendory:2015a} the possibility of recovery was shown for measures on the two-dimensional sphere from moments with respect to spherical harmonics.          

In this work, we show, that the recovery on the rotation group $SO(3)$ can be stated also as an \emph{TV-minimization} problem and the basic principle of the proof of uniqueness can be carried over from the trigonemetric case shown in \cite{Candes:2014}. Nevertheless, the actual construction of a \emph{dual certificate} on $SO(3)$ requires \emph{localization} estimates for \emph{interpolation kernels} and corresponding \emph{derivatives} with \emph{explicit} constants, which are of interest on its own. This is the main contribution of this work.   
 
The paper is organized in the following way. In Section \ref{sec:ex_rec}, we introduce the necessary analysis on the rotation group including \emph{Wigner D-functions}, state the problem of \emph{exact recovery} and point out the connection to the \emph{total variation minimization} problem and the existence of a \emph{dual certificate}. Section \ref{sec:localker} provides \emph{locality results} for kernels on the rotation group, that are the key ingredient for the choosen construction of the dual certificate. The actual construction is presented in Section \ref{sec:dualconstr}, where we show the explicit \emph{super-resolution factor}. A short conclusion and outlook is given in Section \ref{sec:conclu}.

\section{Rotation Group and Exact Recovery}\label{sec:ex_rec}
Here we give a short reminder on the analysis on the rotation group including \emph{Wigner D-functions} and state the problem of \emph{exact recovery}. Moreover we show the connection to the TV-minimization problem and to the
existence of a so called \emph{dual certificate}.

\subsection{Analysis on SO(3)}\label{subsec:rotana}
The rotation group $SO(3)$ is defined as the space of matrices
\begin{equation*}
SO(3):=\{x \in \R^{3 \times 3}: x^Tx=I, \operatorname{det}x=1\},
\end{equation*}
which is a group under the action of matrix multiplication. By \emph{Euler's Rotation Theorem}, there is for each $A \in SO(3)$ a unit vector $e \in \R^3$ and an angle $\omega \in [0,\pi]$, such that $A$ is a rotation with rotation axis $e$ and rotation angle $\omega$.
Using \emph{Rodrigues rotation formula} yields
\begin{equation*}
A=I\cos(\omega)+(1-\cos(\omega))ee^T+[e]\sin(\omega),
\end{equation*} 
where
\begin{equation*}
[e]=\begin{bmatrix}
0 & -e_3 & e_2\\
e_3 &0& -e_1\\
-e_2 & e_1 & 0
\end{bmatrix}.
\end{equation*}
This identification shows that $SO(3)$ is diffeomorphic to the real three-dimensional projective space and is therefore a connected compact Lie group.
A Metric on $SO(3)$, that is compatible with this topology and invariant with respect to the group action, is given by
\begin{equation*}
d(x,y)=\omega(y^{-1}x)=\arccos\left(\frac{\operatorname{tr}(y^{-1}x)-1}{2}\right),
\end{equation*}
which is equal to the rotation angle of the matrix $y^{-1}x$.  The corresponding Lie algebra of the Lie group $SO(3)$ is given by the skew symmetric matrices, i.e.
\begin{equation*}
\mathfrak{so}(3)=\{x \in \R^{3\times3}: x^T=-x\}.
\end{equation*}
The generators of the Lie-Algebra $\mathfrak{so}(3)$ are given by 
\begin{equation*}
\mathcal{L}_1=\begin{pmatrix}
0&0&0 \\ 0 &0 & -1\\ 0 & 1 &0
\end{pmatrix},\quad
\mathcal{L}_2=\begin{pmatrix}
0&0&1 \\ 0 &0 & 0\\ -1 & 0 &0
\end{pmatrix}, \quad
\mathcal{L}_3=\begin{pmatrix}
0&-1&0 \\ 1 &0 & 0\\ 0 & 0 &0
\end{pmatrix}.
\end{equation*}
The corresponding elements in $SO(3)$ are given for $t \in \R$ by
\begin{align*}
&\e^{t\mathcal{L}_1}=\begin{pmatrix}
1 & 0 & 0\\ 0 & \cos(t) & -\sin(t)\\ 0 & \sin(t) & \cos(t)
\end{pmatrix},
\e^{t\mathcal{L}_2}=\begin{pmatrix}
\cos(t) & 0 & \sin(t)\\ 0 & 1 & 0\\ -\sin(t) & 0 & \cos(t)
\end{pmatrix},\\
&\e^{t\mathcal{L}_3}=\begin{pmatrix}
\cos(t) & -\sin(t) & 0\\ \sin(t) & \cos(t) & 0 \\ 0 & 0 & 1
\end{pmatrix},
\end{align*}
where $\e^{A}=\sum_k\frac{A^k}{k!}$ denotes the matrix exponential. The matrix exponential is used to define differential operators along these directions, given by 
\begin{equation*}
X_if(x)=\lim_{t\rightarrow 0} t^{-1}(f(xe^{t\mathcal{L}_i})-f(x)),
\end{equation*}
for a differentiable function $f : SO(3) \rightarrow \C$. For a two times differentiable function $f$ the \emph{Hessian matrix} is given by 
\begin{equation*}\label{def:hessian}
Hf=
\begin{pmatrix}
X_1X_1f & X_1X_2f & X_1X_3f\\
X_2X_1f & X_2X_2f & X_2X_3f\\
X_3X_1f & X_3X_2f & X_3X_3f
\end{pmatrix}
-\frac{1}{2}\begin{pmatrix}
0 & X_3f & -X_2f\\
-X_3f & 0 & X_1f\\
X_2f & -X_1f & 0
\end{pmatrix}.
\end{equation*}

Since $SO(3)$ is a compact group, there is a regular Borel measure $\lambda$, that is invariant under the group action, i.e. $\lambda(xB)=\lambda(B)=\lambda(Bx)$ for all Borel sets $B$. This measure can be normalized such that
\begin{equation*}
\int_{SO(3)}\dx \lambda(x)=1.
\end{equation*}  

Using an Euler angle parametrization, i.e. each element $x \in SO(3)$ is represented by
\begin{equation*}
x=R_{Z}(\alpha)R_{X}(\beta)R_{Z}(\gamma),
\end{equation*}
with $(\alpha,\beta,\gamma) \in [0,2\pi) \times [0,\pi] \times [0,2\pi)$ and 
\begin{equation*}
R_Z(t)=\begin{pmatrix}
\cos(t) & -\sin(t) & 0\\ \sin(t) & \cos(t) & 0\\ 0 & 0& 1
\end{pmatrix},
\qquad
R_X(t)=\begin{pmatrix}
1 & 0 & 0\\ 0 & \cos(t) & -\sin(t)\\ 0 & \sin(t)& \cos(t)
\end{pmatrix},
\end{equation*}
we can write down the integral for each measurable function $f: SO(3) \rightarrow \C$ explicitly as
\begin{equation*}
\int_{SO(3)}f(x)\dx \lambda(x)=\frac{1}{8\pi^2}\int_{0}^{2\pi}\int_0^{\pi}\int_0^{2\pi}f(x(\alpha,\beta,\gamma))\sin(\beta)\dx \alpha \dx \beta \dx \gamma.
\end{equation*}
For functions that only depend on the rotation angle, i.e. $f(x)=\tilde{f}(\omega(x))$ the integral reduces to
\begin{equation*}
\int_{SO(3)}f(x)\dx \lambda(x)=\frac{2}{\pi}\int_0^{\pi}\tilde{f}(t) \sin^2\left(\frac{t}{2}\right)\dx t.
\end{equation*}
The space $L^2(SO(3))$ of square-integrable functions with respect to $\lambda$ is defined in the usual way. The Peter-Weyl Theorem now states that the right regular representation of $SO(3)$ splits up into an orthogonal direct sum of irreducible finite-dimensional representations and the matrix coefficients of these irreducible representation form an orthogonal basis for $L^2(SO(3))$. The dimensions of the irreducible representations are given by $2l+1$, $l \in \N$ and the matrix coefficients $D^l_{k,m}$, $-l \leq k,m \leq l$ are often called \emph{Wigner D-functions}. We have that
\begin{equation*}
\{\sqrt{2l+1}D^l_{k,m},-l\leq k,m\leq l, l \in \N\}
\end{equation*}   
form an orthonormal basis of $L^2(SO(3))$. The value $l\in \N$ will be called \emph{degree}. 
In the Euler angle parametrization the Wigner $D$-functions are given for $l \in \N$ and $-l \leq k,m \leq l$ by
\begin{equation*}
D^l_{k,m}(\alpha,\beta,\gamma)=\e^{-ik\alpha}P^l_{k,m}(\cos(\beta))\e^{-im\gamma},
\end{equation*}
where $P^l_{k,m}$ is given by
\begin{equation*}
P^l_{k,m}(t)=C_{l,k,m}(1-t)^{-(m-k)/2}(1+t)^{-(m+k)/2}\frac{d^{l-m}}{d t^{l-m}}\left((1-t)^{l-k}(1+t)^{l+k}\right),
\end{equation*}
with $C_{l,k,m}= \frac{(-1)^{l-k}i^{m-k}}{2^l(l-k)!}\sqrt{\frac{(l-k)!(l+m)!}{(l+k)!(l-m)!}}$. 
 The space of all finite linear combinations of Wigner D-functions with degree less or equal to $N$ will be denoted as
\begin{equation*}
\Pi_N:=\operatorname{span}\{D_{k,m}^l: \; -l \leq k,m\leq l,\; l \leq N \}
\end{equation*}
and will also be called \emph{generalized polynomials} of degree $N$.

\subsection{Exact Recovery from Wigner D-moments}
In the following, we will introduce the problem of exact recovery of dirac measures from its moments with respect to these Wigner D-functions up to a degree $N$, which means we can access the moments of $D^l_{k,m}$ for $-l \leq k,m \leq l$ only for $l \leq N$.

Consider a dirac measure of the form
\begin{equation}\label{point:meas}
\mu^{*}=\sum_{i=1}^Mc_i\delta_{x_i},
\end{equation} 
where $M \in \N$, $c_i\in \R$ are real valued coefficients and $\delta_{x_i}$ are the point measures centred at pairwise distinct $x_i \in SO(3)$. All parameters $M, c_i, x_i$ are unknown and we can only access
\begin{equation*}
\langle \mu^{*}, D^l_{k,m} \rangle:=\int_{SO(3)}D^l_{k,m}(x) \dx \mu(x)=\sum_{i=1}^Mc_iD^l_{k,m}(x_i),  
\end{equation*}
for $-l \leq k,m \leq l, l \leq N$. 
The main Theorem of this paper states, that ,under a suitable condition on the \emph{separation distance}  
\begin{equation*}
\rho(\mathcal{C}):=\min_{x_i\neq x_j}\omega(x_j^{-1}x_i)
\end{equation*}
of the support points $\mathcal{C}=\{x_1,\dots,x_M\}$, $\mu^{*}$ is the unique measure of \emph{minimal total variation} that obeys the prescribed moments.   
For a signed Borel measure $\mu \in \mathcal{M}(SO(3))$ we define its \emph{total variation} by
\begin{equation*}
\|\mu\|_{TV}=|\mu|(SO(3))=\sup\sum_{j}|\mu(B_j)|,
\end{equation*} 
where the supremum is taken over all Partitions $B_j$ of $SO(3)$. 

\begin{Theorem}\label{exact:recmain}
Suppose the support points $\mathcal{C}=\{x_1,\dots, x_M\}$ of the measure $\mu^{*}$, given in \eqref{point:meas}, obey a separation distance of $\rho(\mathcal{C})\geq \frac{36}{N+1}$ for $N\geq 20$. Then $\mu^{*}$ is the unique solution of the minimization problem
\begin{align}\label{opti:prob}
\min_{\mu \in \mathcal{M}(SO(3))}\|\mu\|_{TV},\quad \text{subject to} \quad &\langle\mu, D^l_{k,m} \rangle=\langle\mu^{*}, D^l_{k,m} \rangle,\\
&\text{for } -l\leq k,m \leq l \text{ and } l \leq N.\nonumber
\end{align}
\end{Theorem}
The entry point for the proof is the connection of the solution of the optimization problem \eqref{opti:prob} to the existence of a so called \emph{dual certificate}.   
This connection has been exploited in various settings, see e.g. \cite{Castro:2012}, \cite{Candes:2014}, \cite{Bendory:2014} and \cite{Bendory:2015}.
We will state this connection for the rotation group in the following Theorem.
\begin{Theorem}\label{exact:recdual}
Let $\mu^{*}=\sum_{i=1}^Mc_i\delta_{x_i}$ with $c_i \in \R$ and $\mathcal{C}=\{x_1,\dots,x_M\} \subset SO(3)$. If for all ordered sets $(u_i)_{i=1}^m \in \{-1,1\}^m$ there is a function $q \in \Pi_N$, called \emph{dual certificate}, such that
\begin{align}\label{interpol:cond1}
q(x_i)&=u_i, \quad \text{for } x_i \in \mathcal{C},\\
|q(x)| &<1,\quad \text{for } x \in SO(3)\setminus \mathcal{C},\nonumber
\end{align}
then $\mu^{*}$ is the unique solution of the optimization problem \eqref{opti:prob}.
\end{Theorem}
The proof is purely measure theoretic and can be easily transferred from those proofs found in the references cited above. We will therefore omit it here.

To explicitly construct a dual certificate, we will borrow ideas from \cite{Candes:2014}, where the construction was done for \emph{trigonometric polynomials} and was later adapted to the case of \emph{algebraic polynomials} in \cite{Bendory:2014} and \emph{spherical harmonics} in \cite{Bendory:2015}. 

In order to satisfy the conditions $\eqref{interpol:cond1}$, one formulates the \emph{Hermite-type} interpolation problem
\begin{align}\label{interpol:cond2}
q(x_i)&=u_i,\\
X_1q(x_i)&=X_2q(x_i)=X_3q(x_i)=0,\nonumber
\end{align}
for $x_i \in \mathcal{C}$, where $X_k$ are the differential operators defined in Section \ref{subsec:rotana}. This means, besides the interpolation itself, we ask for local extrema at the interpolation points. One then seeks a solution $q$ to this interpolation problem in the space $\Pi_N$, that satisfies, due to the local extrema conditions, $|q(x)| < 1$ for $x \in SO(3)\setminus \mathcal{C}$. The constructed interpolant is of the form
\begin{align*}
q(x)=\sum_{i=1}^M\alpha_{i,0}\sigma_N(x,x_i)+ \alpha_{i,1}X_1^y\sigma_N(x,x_i)+  \alpha_{j,2}X_2^y\sigma_N(x,x_i)+  \alpha_{j,3}X_3^y\sigma_N(x,x_i),
\end{align*}
where $\sigma_N$ is an \emph{interpolation kernel} of the form 
\begin{equation*}
\sigma_N(x,y)=\sum_{l=0}^{N}h_{N}(l)\sum_{-l \leq k,m \leq l}D^l_{k,m}(x)\overline{D^l_{k,m}(y)},
\end{equation*}
with positive weights $h_N(l) > 0$. Observe, that the expressions $\sigma_N(x,x_i)$ and $X_j^y\sigma_N(x,x_i)$, where the superscript indicates the action of the differential operator on the second variable, are by construction generalized polynomials of degree $N$ in the first variable, which means $q \in \Pi_N$. Applying the interpolation conditions \eqref{interpol:cond2} will lead to the linear system of equations

\begin{equation}\label{interpol:mat}
K\alpha:=
\begin{pmatrix}
\sigma_N & X^x_1\sigma_N & X^x_2\sigma_N & X^x_3\sigma_N\\
X^y_1\sigma_N & X^x_1X^y_1\sigma_N & X^x_2X^y_1\sigma_N & X^x_3X^y_1\sigma_N \\
X^y_2\sigma_N & X^x_1X^y_2\sigma_N & X^x_2X^y_2\sigma_N & X^x_3X^y_2\sigma_N \\
X^y_3\sigma_N & X^x_1X^y_3\sigma_N & X^x_2X^y_3\sigma_N & X^x_3X^y_3\sigma_N
\end{pmatrix}
\begin{pmatrix}
\alpha_0\\
\alpha_1\\
\alpha_2\\
\alpha_3
\end{pmatrix}
=
\begin{pmatrix}
u\\
0\\
0\\
0\\
\end{pmatrix},
\end{equation}
where the entries in the matrix corresponds to blocks of the form $\sigma_N=(\sigma_N(x_i,x_j))_{i,j=1}^M$ and in the same way for the derivatives. The blocks in the vectors are given by $\alpha_k=(\alpha_{k,j})_{j=1}^M$ for $k=0,1,2,3$ and $u=(u_j)_{j=1}^M$. To find the coefficients, we have to show the invertibility of the matrix $K$. Due to the block structure of $K$ this is done using an \emph{iterative block inversion}, explained in Section \ref{sec:dualconstr}, and the fact 
that a matrix $A$ is invertible if
\begin{equation*}
\|I-A\|_{\infty} < 1,
\end{equation*} 
where $\|A\|_{\infty}=\max_{i}\sum_{j}|a_{i,j}|$. In this case the norm of the inverse is bounded by
\begin{equation*}
\|A^{-1}\|_{\infty}\leq \frac{1}{1-\|I-A\|_{\infty}}.
\end{equation*}
Thus, to show the invertibility of the matrix $K$, we have to employ \emph{localization estimates} for the entries of the matrix $K$, which means we have to bound the expressions $|\sigma_N(x_i,x_j)|, |X_k^y\sigma_N(x_i,x_j)|$ and $|X_n^xX^y_k\sigma_N(x_i,x_j)|$. The values of these expressions should decrease, if the distance of $\omega(x_j^{-1}x_i)$ becomes bigger. We are locking for estimates of the form
\begin{equation*}
|\sigma_N(x_i,x_j)| \leq \frac{c}{((N+1)\omega(y^{-1}x))^s}  
\end{equation*}
for some constants $s$ and $c$ only depending on the weights $h_N$ and similar estimates for the derivatives. Using these estimates we find explicit bounds on the supremum norm of the coefficients. Once we have found the coefficients, we have to show the condition $|q(x)|< 1$, where $x$ is not an interpolation point. 
This includes convexity arguments for the interpolant $q$, which means we have to deal with the entries of the \emph{Hessian matrix} of $q$, where third mixed derivatives appear. Therefore we also need localization estimates for third derivatives.

The key ingredients for the construction of the interpolant $q$ are \emph{localization estimates} for the interpolation kernel $\sigma_N$ and its various derivatives. Moreover, we need \emph{explicit constants} in these estimates to show the claimed properties of the interpolant. This is the topic of the next section.

\section{Localized Kernels}\label{sec:localker}
The localization properties of the interpolation kernel and its derivatives for special choices of weights can be derived from corresponding localization principles for \emph{trigonometric polynomials}.
For the kernel itself, this was shown in \cite{Kunis:2008}. We define the kernel 
\begin{equation*}
\sigma_N(x,y)=\sum_{l=0}^{N}h_{N}(l)\sum_{-l \leq k,m \leq l}D^l_{k,m}(x)\overline{D^l_{k,m}(y)},
\end{equation*}
with positive weights $h_N(l)>0$. By the addition formula of Wigner $D$-functions this can also be written as
\begin{equation*}
\sigma_N(x,y)=\sum_{l=0}^{N}h_{N}(l)U_{2l}\left(\cos\left(\frac{\omega(y^{-1}x)}{2}\right)\right)=
\sum_{l=0}^{N}h_{N}(l)\sum_{k=-l}^{l}\e^{ik\omega(y^{-1}x)},
\end{equation*}
where $U_{2l}$ is the Chebyshev polynomial of the second kind of order $2l$. Now we define the filter coefficients $h_{N}$ by
\begin{equation*}
h_{N}(l)=\frac{1}{\|g\|_{1,N}}\begin{cases}
g\left(\frac{l}{2(N+1)}\right)-g\left(\frac{l+1}{2(N+1)}\right),& 0\leq l < N,\\
g\left(\frac{N}{2(N+1)}\right), & l=N,
\end{cases}
\end{equation*}
where $g :\R \rightarrow \R_+$ is a symmetric positive function with $\operatorname{supp}(g) \subseteq [-\frac{1}{2},\frac{1}{2}]$, that is decreasing for positive values. Its discrete coefficient norm is given
by\begin{equation*}
\|g\|_{1,N}:=\sum_{l=-N}^N g\left(\frac{l}{2(N+1)}\right).
\end{equation*}
Plugging this in, leads to
\begin{equation*}
\sigma_N(x,y)=\tilde{\sigma}_N(\omega(y^{-1}x)):=\frac{1}{\|g\|_{1,N}}\sum_{k=-N}^Ng\left(\frac{k}{2(N+1)}\right)\e^{ik\omega(y^{-1}x)}.
\end{equation*}
This shows, that the kernel $\sigma_N$ is a zonal kernel, i.e. its value only depends on the distance of $x$ and $y$, and localization estimates are derived from localization principles for the trigonometric polynomial $\tilde{\sigma}_N$. In appendix \ref{app:loctrig} we choose a specific filter function, given by a \emph{B-spline} of order $s$, to derive estimates of the form
\begin{equation*}
|\tilde{\sigma}_N^{(l)}(t)| \leq \frac{c_{l,s}}{(N+1)^{s-l}|t|^s}, \quad l=0,\dots,3,
\end{equation*}
with explicit constants $c_{l,s}$.

Since we have the equality
\begin{equation*}
\sigma_N(x,y)=\tilde{\sigma}_N(\omega(y^{-1}x)),
\end{equation*}
we have immediately
\begin{equation}\label{local:kernel0}
|\sigma_N(x,y)| \leq \frac{c_{0,s}}{((N+1)\omega(y^{-1}x))^s},
\end{equation}
which shows the localization of the kernel $\sigma_N$. The derivative kernels $X_n^y\sigma_N, X_i^xX_n^y\sigma_N$ and $X_j^xX_i^xX_n^y\sigma_N$ are no longer zonal functions. 
Nevertheless, the obey analog localization estimates with the same constants as in the trigonometric case. Thereby Theorem \ref{local:kernelall} provides estimates for the entries of the interpolation matrix in \eqref{interpol:mat},
whereas Lemma \ref{local:kernelall2} and \ref{lip:est} give bounds for the entries of the Hessian.
\begin{Theorem}\label{local:kernelall}
We have for $s \in 2\N$, $s \geq 6$, $N\geq 2s$, $\omega(y^{-1}x)\geq \frac{\pi}{2(N+1)}$
\begin{align*}
|X_n^y\sigma_N(x,y)|&\leq \frac{c_{1,s}}{(N+1)^{s-1}\omega(y^{-1}x)^s},\\
|X_i^xX_n^y\sigma_N(x,y)|&\leq \frac{c_{2,s}}{(N+1)^{s-2}\omega(y^{-1}x)^s},
\end{align*}
and $c_{l,s}$  are the constants of Theorem \ref{trig:est}.
\end{Theorem}
\begin{proof}
We calculate the derivative kernel $X_1^y\sigma_N$. For $\omega(y^{-1}x) \notin \{0,\pi\}$, we have
\begin{align*}
& \frac{\sigma_N(x,y \e^{t\mathcal{L}_1})-\sigma_N(x,y)}{t}\\
&=\frac{\tilde{\sigma}_N(\omega(\e^{-t\mathcal{L}_1}y^{-1}x))-\tilde{\sigma}_N(\omega(y^{-1}x))}{\omega(\e^{-t\mathcal{L}_1}y^{-1}x)-\omega(y^{-1}x)}\frac{\omega(\e^{-t\mathcal{L}_1}y^{-1}x)-\omega(y^{-1}x)}{\operatorname{tr}(\e^{-t\mathcal{L}_1}y^{-1}x)-\operatorname{tr}(y^{-1}x)}\\
&\quad \frac{\operatorname{tr}(\e^{-t\mathcal{L}_1}y^{-1}x)-\operatorname{tr}(y^{-1}x)}{t}.
\end{align*} 
The limits are given by 
\begin{equation*}
\lim_{t \rightarrow 0} \frac{\omega(\e^{-t\mathcal{L}_1}y^{-1}x)-\omega(y^{-1}x)}{\operatorname{tr}(\e^{-t\mathcal{L}_1}y^{-1}x)-\operatorname{tr}(y^{-1}x)}=\frac{1}{-2\sqrt{1-(\frac{\operatorname{tr}(y^{-1}x)-1}{2})^2}}
\end{equation*}
and
\begin{equation*}
\lim_{t \rightarrow 0} \frac{\operatorname{tr}(\e^{-t\mathcal{L}_1}y^{-1}x)-\operatorname{tr}(y^{-1}x)}{t}=((y^{-1}x)_{32}-(y^{-1}x)_{23}).
\end{equation*}
Therefore
\begin{align*}
X_1^{y}\sigma_N(x,y)&=\tilde{\sigma}_N'(\omega(y^{-1}x))\frac{((y^{-1}x)_{32}-(y^{-1}x)_{23})}{-2\sqrt{1-(\frac{\operatorname{tr}(y^{-1}x)-1}{2})^2}}\\
&=\tilde{\sigma}_N'(\omega(y^{-1}x))\frac{((y^{-1}x)_{32}-(y^{-1}x)_{23})}{-2\sin(\omega(y^{-1}x))}
&=-\tilde{\sigma}_N'(\omega(y^{-1}x))e_1,
\end{align*}
where $e_1=e_1(y^{-1}x)$ is the first component of the unit vector describing the rotation axis of $y^{-1}x$.
In the same way one can calculate
\begin{align*}
X_2^{y}\sigma_N(x,y)&=\tilde{\sigma}_N'(\omega(y^{-1}x))\frac{((y^{-1}x)_{31}-(y^{-1}x)_{13})}{2\sin(\omega(y^{-1}x))}=-\tilde{\sigma}_N'(\omega(y^{-1}x))e_2,\\
X_3^{y}\sigma_N(x,y)&=\tilde{\sigma}_N'(\omega(y^{-1}x))\frac{((y^{-1}x)_{12}-(y^{-1}x)_{21})}{2\sin(\omega(y^{-1}x))}=-\tilde{\sigma}_N'(\omega(y^{-1}x))e_3.
\end{align*}
Also observe that we have
\begin{equation*}
X_n^x\sigma_N(x,y)=-X_n^y\sigma_N(x,y).
\end{equation*}
These expressions are valid for all $x,y \in SO(3)$ with $\operatorname{tr}(y^{-1}x) \notin \{\-1,3\}$. 
We know that $X_{n}^y\sigma_N(x,y)$ is always a finite sum of products of Wigner D-functions, since each operator $X_i$ maps a Wigner D-function to sums of Wigner D-functions, see e.g. \cite{EngAppl:2000}.
Thus, we know for a fixed $x \in SO(3)$ that $X_{n}^y\sigma_N(x,y)$ exists for all $y \in SO(3)$ and is continuous,
which means that by limit considerations the expressions above are also valid if $\omega(y^{-1}x)= \pi$. In the case $y=x$, we have by limit considerations $X_n^y\sigma_N(x,x) =\tilde{\sigma}_N'(0)=0$. 
This leads to
\begin{equation*}
|X_n^y\sigma_N(x,y)|\leq |\tilde{\sigma}_N'(\omega(y^{-1}x))|\leq\frac{c_{1,s}}{(N+1)^{s-1}\omega(y^{-1}x)^s},
\end{equation*}   
which gives the estimate for the first type of kernel.

For the estimation of the second kind of kernel we use the product rule and the calculations above to show
\begin{equation*}
X_i^xX_n^y\sigma_N(x,y)=- X_i^xe_n(x,y)\tilde{\sigma}_N^{'}(\omega(y^{-1}x))- e_n(y^{-1}x)X_i^x(\tilde{\sigma}_N'(\omega(y^{-1}x))).
\end{equation*}
In the same way as before we can show
\begin{equation*}
X_i^x(\tilde{\sigma}_N'(\omega(y^{-1}x)))= \tilde{\sigma}_N^{''}(\omega(y^{-1}x))e_i(y^{-1}x),
\end{equation*}
and thus
\begin{equation*}
X_i^xX_n^y\sigma_N(x,y)=- X_i^xe_n(x,y) \tilde{\sigma}_N'(\omega(y^{-1}x))- e_n(y^{-1}x)\tilde{\sigma}_N^{''}(\omega(y^{-1}x))e_i(y^{-1}x).
\end{equation*}

Thus, the only part we have to calculate is $X_i^xe_n(x,y)$. Again, we restrict ourself firstly to $\omega(y^{-1}x) \notin \{0,\pi\}$ and extend afterwards by continuity. We concentrate on the example $n=1,i=3$. We have
\begin{align*}
& e_1(y^{-1}x\e^{t \mathcal{L}_3})-e_1(y^{-1}x)\\
&= \left[\frac{(y^{-1}x\e^{t\mathcal{L}_3})_{32}-(y^{-1}x\e^{t\mathcal{L}_3})_{23}}{2\sin(\omega(y^{-1}x\e^{t\mathcal{L}_3}))} - \frac{(y^{-1}x)_{32}-(y^{-1}x)_{23}}{2\sin(\omega(y^{-1}x))} \right],\\
&=\frac{1}{2\sin(\omega(y^{-1}x\e^{t \mathcal{L}_3}))}\bigg[(y^{-1}x)_{32}\left(\cos(t)-\frac{\sin(\omega(y^{-1}x\e^{t\mathcal{L}_3}))}{\sin(\omega(y^{-1}x))}\right)+\dots \\
&\dots (y^{-1}x)_{23}\left(\frac{\sin(\omega(y^{-1}x\e^{t\mathcal{L}_3}))}{\sin(\omega(y^{-1}x))}-1\right)-(y^{-1}x)_{31}\sin(t)\bigg]. 
\end{align*}

%
Using the rule of L'H$\hat{\text{o}}$pital we have
\begin{equation*}
\lim_{t \rightarrow 0}\frac{\cos(t)-\frac{\sin(\omega(y^{-1}x\e^{t\mathcal{L}_3}))}{\sin(\omega(y^{-1}x))}}{t}=-e_3(y^{-1}x)\bigg( \frac{\cos(\omega(y^{-1}x))}{\sin(\omega(y^{-1}x))}\bigg),
\end{equation*}
where $e_3(y^{-1}x)$ denotes the third component of the unit vector representing the rotation axis of $y^{-1}x$. 
In the same way we get
\begin{equation*}
\lim_{t \rightarrow 0}\frac{\frac{\sin(\omega(y^{-1}x\e^{t\mathcal{L}_3}))}{\sin(\omega(y^{-1}x))}-1}{t}=e_3(y^{-1}x)\bigg( \frac{\cos(\omega(y^{-1}x))}{\sin(\omega(y^{-1}x))}\bigg).
\end{equation*}

Combining all this, we end up with
\begin{align*}
X_3^xe_1(x,y)&=\lim_{t \rightarrow 0} t^{-1} (e_1(y^{-1}x\e^{t \mathcal{L}_3})-e_1(y^{-1}x));\\
&=\frac{1}{2\sin(\omega(y^{-1}x))}\left[ ((y^{-1}x)_{23}-(y^{-1}x)_{32})e_3(y^{-1}x)\bigg( \frac{\cos(\omega(y^{-1}x))}{\sin(\omega(y^{-1}x))}\bigg)-(y^{-1}x)_{31}\right]\\
&=-e_1(y^{-1}x)e_3(y^{-1}x)\bigg( \frac{\cos(\omega(y^{-1}x))}{\sin(\omega(y^{-1}x))}\bigg)-\frac{(y^{-1}x)_{31}}{2\sin(\omega(y^{-1}x))}.
\end{align*} 
Now we again use the \emph{Rodrigues formula} for $(y^{-1}x)_{31}=(1-\cos(\omega))e_1e_3-\sin(\omega)e_2$ and get
\begin{equation*}
X_3^xe_1(x,y)=-\frac{e_1(y^{-1}x)e_3(y^{-1}x)(1+\cos(\omega(y^{-1}x)))}{2\sin(\omega(y^{-1}x))}+\frac{e_2(y^{-1}x)}{2}.
\end{equation*}
Similarly we can calculate
\begin{equation*}
X_2^xe_1(x,y)=-\frac{e_1(y^{-1}x)e_2(y^{-1}x)(1+\cos(\omega(y^{-1}x)))}{2\sin(\omega(y^{-1}x))}-\frac{e_3(y^{-1}x)}{2}
\end{equation*}
and
\begin{equation*}
X_1^xe_1(x,y)=\frac{1+\cos(\omega(y^{-1}x))}{2\sin(\omega(y^{-1}x))}(1-e_1(y^{-1}x)^2).
\end{equation*}
For the other components of the rotation axis the differentials are computed in the same way and are given by
\begin{align}\label{deriv:rotax}
X_1^xe_2(x,y)&=-\frac{e_1(y^{-1}x)e_2(y^{-1}x)(1+\cos(\omega(y^{-1}x)))}{2\sin(\omega(y^{-1}x))}+\frac{e_3(y^{-1}x)}{2},\nonumber\\
X_2^xe_2(x,y)&=\frac{1+\cos(\omega(y^{-1}x))}{2\sin(\omega(y^{-1}x))}(1-e_2(y^{-1}x)^2),\nonumber \\
X_3^xe_2(x,y)&=-\frac{e_2(y^{-1}x)e_3(y^{-1}x)(1+\cos(\omega(y^{-1}x)))}{2\sin(\omega(y^{-1}x))}-\frac{e_1(y^{-1}x)}{2},\\
X_1^xe_3(x,y)&=-\frac{e_1(y^{-1}x)e_3(y^{-1}x)(1+\cos(\omega(y^{-1}x)))}{2\sin(\omega(y^{-1}x))}-\frac{e_2(y^{-1}x)}{2},\nonumber \\
X_2^xe_3(x,y)&=-\frac{e_2(y^{-1}x)e_3(y^{-1}x)(1+\cos(\omega(y^{-1}x)))}{2\sin(\omega(y^{-1}x))}+\frac{e_1(y^{-1}x)}{2},\nonumber \\
X_3^xe_3(x,y)&=\frac{1+\cos(\omega(y^{-1}x))}{2\sin(\omega(y^{-1}x))}(1-e_3(y^{-1}x)^2).\nonumber
\end{align}

Observe that we have
\begin{align}\label{sin:lowest}
\sin(\omega) &\geq \frac{2}{\pi}\omega, \quad \text{ for } \omega \in (0,\pi/2].
\end{align}
Thus for $\omega \in (\frac{\pi}{2(N+1)},\frac{\pi}{2}]$ we simply estimate
\begin{equation*}
\left|\frac{1+\cos(\omega)}{\sin(\omega)}\tilde{\sigma}_N'(\omega)\right| \leq \pi\frac{|\tilde{\sigma}_N'(\omega)|}{|\omega|}\leq 2(N+1)|\tilde{\sigma}_N'(\omega)|.
\end{equation*}
For $\omega \in (\frac{\pi}{2},\pi]$ we have
\begin{equation*}
\left|\frac{1+\cos(\omega)}{\sin(\omega)}\tilde{\sigma}_N'(\omega)\right| \leq 2\left(1-\frac{x}{\pi}\right)|\tilde{\sigma}_N'(\omega)|\leq|\tilde{\sigma}_N'(\omega)|.
\end{equation*}

Since $|e_ie_j|\leq \frac{1}{2}$ and $N\geq 2s \geq 12$, we have the estimate
\begin{align*}
|X_j^xX_i^y\sigma_N(x,y)| &\leq
\left(\frac{1}{2}(N+1)+\frac{1}{2}\right)|\tilde{\sigma}_N'(\omega(y^{-1}x))|+\frac{1}{2}|\tilde{\sigma}_N^{''}(\omega(y^{-1}x))|,\\
&\leq (N+1)|\tilde{\sigma}_N'(\omega(y^{-1}x))|+\frac{1}{2}|\tilde{\sigma}_N^{''}(\omega(y^{-1}x))|.
\end{align*}
Now we use the localization result of Theorem \ref{trig:est} together with $c_{1,s}\leq \frac{1}{2}c_{2,s}$ to derive
\begin{equation*}
|X_j^xX_i^y\sigma_N(x,y)|\leq \frac{c_{1,s}+\frac{1}{2}c_{2,s}}{(N+1)^{s-2}\omega(y^{-1}x)^s}\leq\frac{c_{2,s}}{(N+1)^{s-2}\omega(y^{-1}x)^s}.
\end{equation*}
If $i=j$, we have 
\begin{align*}
|X_i^xX_i^y\sigma_N(x,y)| &\leq (1-e_i^2)(N+1)|\tilde{\sigma}_N^{'}(\omega(y^{-1}x))|+e_i^2|\tilde{\sigma}_N^{''}(\omega(y^{-1}x))|,\\
&\leq \frac{(1-e_i^2)c_{1,s}+e_i^2c_{2,s}}{(N+1)^{s-2}\omega(y^{-1}x)^s}\leq\frac{c_{2,s}}{(N+1)^{s-2}\omega(y^{-1}x)^s}.  
\end{align*}
For $x=y$ we have  
\begin{equation*}
X_i^xX_i^y\sigma_N(x,x)=-\tilde{\sigma}_N^{''}(0), \quad X_j^xX_i^y\sigma_N(x,x)=0.
\end{equation*}
\end{proof}
The shown bounds are useful for estimating the entries of the interpolation matrix. In addition, we need localization estimates for the entries of the Hessian matrix.
We have to distinguish between two cases, namely $\omega(y^{-1}x))$ is well separated from zero, covered by Lemma \ref{local:kernelall2}, and $\omega(y^{-1}x))$ approaches zeros, which is handled in Lemma \ref{lip:est}.
\begin{Lemma}\label{local:kernelall2}
For $s \in 2\N$, $s \geq 6$, $N\geq 2s$, $\omega(y^{-1}x)\geq \frac{\pi}{2(N+1)}$,  $i,j,n$ pairwise different with the sign convention 
\begin{equation*}
X_j^xe_i=-e_ie_j\left(\frac{1+\cos(\omega)}{2\sin(\omega)}\right)\pm\frac{e_n}{2},
\end{equation*}
see proof of Theorem \ref{local:kernelall}, we have
\begin{align*}
|X_i^{x}X_i^{x}X_k^{y}\sigma_N(x,y)|&\leq \frac{1.2 \cdot c_{3,s}}{(N+1)^{s-3}\omega(y^{-1}x)^s},\\
\left|X_j^xX_i^x\sigma_N(x,y)\mp \frac{1}{2}X_n^x\sigma_N(x,y)\right| &\leq \frac{c_{2,s}}{(N+1)^{s-2}\omega(y^{-1}x)^s},\\
\left|X_j^xX_i^xX_k^y\sigma_N(x,y)\mp \frac{1}{2}X_n^xX_k^y\sigma_N(x,y)\right|&\leq \frac{1.2 \cdot c_{3,s}}{(N+1)^{s-3}\omega(y^{-1}x)^s}, 
\end{align*}
for $k=i, j, n$. 
\end{Lemma}

\begin{Lemma}\label{lip:est}
For $s \in 2\N$, $s \geq 6$, $N\geq 2s$, $\omega(y^{-1}x)\leq \frac{\delta}{N+1}$, $0 \leq \delta \leq \frac{\pi}{2}$ we have the following Lipschitz-type estimates for $i,j,n$ pairwise different
\begin{align*}
\left|X_i^xX_i^x\sigma_N(x,y)-\tilde{\sigma}_N^{''}(0)\right|&\leq \frac{\tilde{d}_{s}}{2}(N+1)^2\delta^2,\\
\left|X_i^xX_i^xX_k^y\sigma_N(x,y)\right|&\leq\tilde{d}_{s}\left((N+1)^3\delta+\frac{1}{4}(N+1)^2\delta^2\right)+\frac{\tilde{c}_s}{4}(N+1)\delta,
\end{align*} 
and with the sign convention such that
\begin{equation*}
X_j^xe_i=-e_ie_j\left(\frac{1+\cos(\omega)}{2\sin(\omega)}\right)\pm\frac{e_n}{2},
\end{equation*}
see proof of Theorem \ref{local:kernelall}, we have 
\begin{align*}
\left|X_j^xX_i^x\sigma_N(x,y)\mp \frac{1}{2}X_n^x\sigma_N(x,y)\right| &\leq \frac{\tilde{d}_{s}}{4}(N+1)^2\delta^2,\\
\left|X_j^xX_i^xX_k^y\sigma_N(x,y)\mp \frac{1}{2}X_n^xX_k^y\sigma_N(x,y)\right|&\leq\tilde{d}_{s}\left((N+1)^3\delta+\frac{1}{4}(N+1)^2\delta^2\right)+\frac{\tilde{c}_s}{4}(N+1)\delta,
\end{align*}
for $k=i,j,n$.
\end{Lemma}
The proofs of these two Lemmas are rather technical and can be found in Appendix \ref{app:proofs}. The last Lemma of this section provides bounds for summing up off-diagonal entries of the interpolation and the Hessian matrix.

\begin{Lemma}\label{offdiag:decay}
Let $x_j \in \mathcal{C}$, where $\mathcal{C}$ obeys a separation condition of $\rho(\mathcal{C})\geq \frac{\nu}{N+1}$ with $\nu \geq \pi$, and let $x \in SO(3)$ such that $d(x,x_j) \leq \varepsilon \frac{\nu}{N+1}$, for $0\leq \varepsilon \leq 1/2$. Then for any $s \geq 6$ and $N \geq 2s$,  $i,j,n$ pairwise different  
\begin{align*}
&\sum_{x_i \in \mathcal{C}\setminus x_j}|\sigma_N(x,x_i)| \leq \frac{C_{0,s}a_{\varepsilon}}{\nu^{s}},\\
&\sum_{x_i \in \mathcal{C}\setminus x_j}|X_n^y\sigma_N(x,x_i)| \leq \frac{C_{1,s}a_{\varepsilon}(N+1)}{\nu^{s}},\\
&\sum_{x_i \in \mathcal{C}\setminus x_j}|X_i^xX_n^y\sigma_N(x,x_i)| \leq\frac{C_{2,s}a_{\varepsilon}(N+1)^2}{\nu^{s}},\\ 
&\sum_{x_i \in \mathcal{C}\setminus x_j}|X_i^xX_i^xX_n^y\sigma_N(x,x_i)| \leq \frac{1.2C_{3,s}a_{\varepsilon}(N+1)^3}{\nu^{s}},\\
&\sum_{x_i \in \mathcal{C}\setminus x_j}\left|X_j^xX_i^x\sigma_N(x,y)\mp \frac{1}{2}X_n^x\sigma_N(x,y)\right| \leq \frac{C_{2,s}a_{\varepsilon}(N+1)^2}{\nu^{s}},\\
&\sum_{x_i \in \mathcal{C}\setminus x_j}\left|X_j^xX_i^xX_k^y\sigma_N(x,y)\mp \frac{1}{2}X_n^xX_k^y\sigma_N(x,y)\right|\leq \frac{1.2C_{3,s}a_{\varepsilon}(N+1)^2}{\nu^{s}}, \quad \text{ for } k=j,i,n,
\end{align*}
with $C_{i,s}=124c_{i,s}\zeta(s-2)$, where $c_{i,s}$ are the constants in Theorem \ref{local:kernelall} resp. Lemma \ref{local:kernelall2}, and $a_{\varepsilon}=\min\{\frac{27}{124}(1-\varepsilon)^{-s}+1,(1-\varepsilon)^{-s}\}$. Here $\zeta$ denotes the \emph{Riemannian Zeta function}.
\end{Lemma}
\begin{proof}
For $x \in SO(3)$, with $d(x,x_j) \leq \varepsilon \frac{\nu}{N+1}$ for some $x_j \in \mathcal{C}$, we define the ring about $x$ by
\begin{equation*}
\mathcal{S}_{m}:=\{y \in SO(3): \frac{\nu m}{N+1} \leq d(x,y) \leq \frac{\nu(m+1)}{N+1}\},
\end{equation*}
for $m \in \N$. By definition we have $\mathcal{S}_{m} = \emptyset$ for $\frac{m\nu}{N+1} > \pi$. 
Moreover, as shown in \cite{Schmid:2009} we can estimate the number of elements in the intersection of $\mathcal{S}_{m}$ with the set $\mathcal{C}\setminus\{x_j\}$ for $m\geq 1$ by
\begin{equation*}
\operatorname{card}(\mathcal{C}\setminus\{x_j\} \cap \mathcal{S}_{m}) \leq 48m^2+48m+28\leq 124 m^2.
\end{equation*}
Using the same technique as in \cite{Schmid:2009}, we have for $m=0$
\begin{equation*}
\operatorname{card}(\mathcal{C}\setminus\{x_j\}\cap \mathcal{S}_0) \leq 27.
\end{equation*}
Since $d(x,x_j) \leq \varepsilon \frac{\nu}{N+1}$, we have $d(x,x_i) \geq \frac{(1-\varepsilon)\nu }{N+1}$ for $x_i \in \mathcal{C}\setminus\{x_j\}\cap \mathcal{S}_0$. Using this and the locality result \eqref{local:kernelall}, we can estimate for $s\geq 4$
\begin{align*}
\sum_{x_i \in \mathcal{C}\setminus x_j}|\sigma_N(x,x_i)| &\leq\sum_{x_i \in (\mathcal{C}\setminus x_j) \cap \mathcal{S}_{0}} \frac{c_{0,s}}{((N+1)d(x,x_i))^s}+ \sum_{m=1}^{\infty}\sum_{x_i \in (\mathcal{C}\setminus x_j) \cap \mathcal{S}_{m}} \frac{c_{0,s}}{((N+1)d(x,x_i))^s},\\
&\leq\frac{27c_{0,s}(1-\varepsilon)^{-s}}{\nu^s}+124c_{0,s}\sum_{m=1}^{\infty}\frac{m^2}{(m\nu)^s},\\
&\leq \frac{27c_{0,s}(1-\varepsilon)^{-s}}{\nu^s}+\frac{124c_{0,s}}{\nu^{s}}\sum_{m=1}^{\infty}\frac{1}{m^{s-2}},\\
&\leq \frac{(27(1-\varepsilon)^{-s}+124)c_{0,s}\zeta(s-2)}{\nu^s},
\end{align*}
where the last inequality follows by the definition of the Zeta function.

On the other hand, we can define the rings around $x_j$ again by
 \begin{equation*}
\tilde{\mathcal{S}}_{m}:=\{y \in SO(3): \frac{(1-\varepsilon)\nu m}{N+1} \leq d(x_j,y) \leq \frac{(1-\varepsilon)\nu(m+1)}{N+1}\},
\end{equation*}.

Since $d(x,x_j) \leq \varepsilon \frac{\nu}{N+1}$, we have $d(x,x_j) \leq \varepsilon d(x_i,x_j)$ for $x_i \in (\mathcal{C} \setminus{x_j}) \cap \tilde{\mathcal{S}}_{m}$ and therefor $d(x,x_i) \geq d(x_i,x_j)-d(x,x_j) \geq \frac{(1-\varepsilon)\nu m}{N+1}$.
Using this and the locality result \eqref{local:kernelall} we can estimate for $s\geq 4$
\begin{align*}
\sum_{x_i \in \mathcal{C}\setminus x_j}|\sigma_N(x,x_i)| &\leq \sum_{m=1}^{\infty}\sum_{x_i \in (\mathcal{C}\setminus x_j) \cap \tilde{\mathcal{S}}_{m}} \frac{c_{0,s}}{((N+1)d(x,x_i))^s},\\
&\leq124c_{0,s}\sum_{m=1}^{\infty}\frac{m^2}{(1-\varepsilon)^s(m\nu)^s},\\
&\leq \frac{124c_{0,s}}{(1-\varepsilon)^s\nu^{s}}\sum_{m=1}^{\infty}\frac{1}{m^{s-2}}=\frac{124c_{0,s}\zeta(s-2)}{(1-\varepsilon)^s\nu^{s}},
\end{align*}

The estimations for the derivatives are derived in the same way using the corresponding locality results of \eqref{local:kernelall}. 
\end{proof}

\section{Construction of the Dual Certificate}\label{sec:dualconstr}
Suppose we are given a set $\mathcal{C}=\{x_1,\dots,x_M\}$ that satisfies for $\nu \geq \pi$ the separation condition
\begin{equation}\label{sep:cond}
\rho(\mathcal{C})= \min_{x_i,x_j \in \mathcal{C}, x_i \neq x_j}d(x_i,x_j) \geq \frac{\nu}{N+1}.
\end{equation}
The proportional factor $\nu$ is called \emph{super-resolution factor} and in this section we show that $\nu=36$ ensures the existence of a dual certificate, i.e. a function $q \in \Pi_N$ that fulfills the conditions of Theorem \ref{exact:recdual}. 

\subsection{Solution of the Interpolation problem}
We wish to find a generalized polynomial $q$ of degree at most $N$ such that
\begin{align*}
q(x_j)&=u_j,\\
X_1q(x_j)&=X_2q(x_j)=X_3q(x_j)=0,
\end{align*}
for $j=1,\dots,M$. Thus we would like to perform a Hermite type interpolation. 
To find a solution to the Hermite interpolation problem in the space $\Pi_N$ we determine coefficients $\alpha_{j,0}, \alpha_{j,1},  \alpha_{j,2},  \alpha_{j,3}$ for $j=1,\dots,M$ in the kernel expansion
\begin{align*}
q(x)=\sum_{j=1}^M\alpha_{j,0}\sigma_N(x,x_j)+ \alpha_{j,1}X_1^y\sigma_N(x,x_j)+  \alpha_{j,2}X_2^y\sigma_N(x,x_j)+  \alpha_{j,3}X_3^y\sigma_N(x,x_j),
\end{align*}
satisfying
\begin{equation}\label{interpol:eq}
K\alpha:=
\begin{pmatrix}
\sigma_N & X^x_1\sigma_N & X^x_2\sigma_N & X^x_3\sigma_N\\
X^y_1\sigma_N & X^x_1X^y_1\sigma_N & X^x_2X^y_1\sigma_N & X^x_3X^y_1\sigma_N \\
X^y_2\sigma_N & X^x_1X^y_2\sigma_N & X^x_2X^y_2\sigma_N & X^x_3X^y_2\sigma_N \\
X^y_3\sigma_N & X^x_1X^y_3\sigma_N & X^x_2X^y_3\sigma_N & X^x_3X^y_3\sigma_N
\end{pmatrix}
\begin{pmatrix}
\alpha_0\\
\alpha_1\\
\alpha_2\\
\alpha_3
\end{pmatrix}
=
\begin{pmatrix}
u\\
0\\
0\\
0\\
\end{pmatrix},
\end{equation}
where the entries in the matrix corresponds to blocks of the form $\sigma_N=(\sigma_N(x_i,x_j))_{i,j=1}^M$ and in the same way for the derivatives. The blocks in the vectors are given by $\alpha_k=(\alpha_{k,j})_{j=1}^M$, for $k=0,1,2,3$, and $u=(u_j)_{j=1}^M$. In the case this matrix is invertible, we have that $q$ satisfies the Hermite interpolation conditions. Moreover, by construction of the kernel $\sigma_N$, the function $q$ is always a polynomial of degree at most $N$. For abbreviation 
we write
\begin{equation*}
\sigma_{ij}=X_i^{x}X_j^{y}\sigma_N, \quad i,j=1,\dots,3.
\end{equation*}

So we have to show that the block matrix 
\begin{equation*}
K=
\begin{pmatrix}
K_0 & \tilde{K_{1}}\\
K_{1} & K_2
\end{pmatrix},
\end{equation*}
with blocks given by 
\begin{align*}
K_0&=\sigma_{00}=\sigma_N,\\
K_1&=
\begin{bmatrix}
\sigma_{01}&\sigma_{02}&\sigma_{03}
\end{bmatrix}^T=
\begin{bmatrix}
X_1^y\sigma_N & X_2^y\sigma_N &X_3^y\sigma_N
\end{bmatrix}^T,\\
\tilde{K_1}&=
\begin{bmatrix}
\sigma_{10}&\sigma_{20}&\sigma_{30}
\end{bmatrix}
=
\begin{bmatrix}
X_1^x\sigma_N & X_2^x\sigma_N &X_3^x\sigma_N
\end{bmatrix},\\
K_2&=
\begin{bmatrix}
\sigma_{11} & \sigma_{21} & \sigma_{31} \\
\sigma_{12} & \sigma_{22} & \sigma_{32} \\
\sigma_{13} & \sigma_{23} & \sigma_{33}
\end{bmatrix}=
\begin{bmatrix}
X^x_1X^y_1\sigma_N & X^x_2X^y_1\sigma_N & X^x_3X^y_1\sigma_N \\
X^x_1X^y_2\sigma_N & X^x_2X^y_2\sigma_N & X^x_3X^y_2\sigma_N \\
X^x_1X^y_3\sigma_N & X^x_2X^y_3\sigma_N & X^x_3X^y_3\sigma_N
\end{bmatrix}
\end{align*}
is invertible. To do this, we use an two step block inversion to show that both the matrix $K_2$ and its Schur complement $K / K_2=K_0-\tilde{K_1}K_2^{-1}K_1$ are invertible. 

To show the invertibility of $K_2$, we split up $K_2$ in the first step furthermore into blocks as
\begin{equation*}
K_2=\begin{pmatrix}
K_{2,0}& \tilde{K}_{2,1}\\
K_{2,1}& K_{2,2}
\end{pmatrix},
\end{equation*}
with
\begin{align*}
K_{2,0}&=\sigma_{11},\\
K_{2,1}&=\begin{bmatrix}
\sigma_{12} & \sigma_{13}
\end{bmatrix}^{T},\\
\tilde{K}_{2,1}&=\begin{bmatrix}
\sigma_{21} & \sigma_{31}
\end{bmatrix},\\
K_{2,2}&=\begin{bmatrix}
\sigma_{22} & \sigma_{32} \\
\sigma_{23} & \sigma_{33}
\end{bmatrix}.
\end{align*}
This shows that $K_2$ is invertible, if $K_{2,2}$ is invertible and its Shur complement in $K_2$ given by 
\begin{equation*}
S=K_{2} / K_{2,2} =K_{2,0}-\tilde{K}_{2,1}K_{2,2}^{-1}K_{2,1}
\end{equation*}
is invertible. 
For the invertibility of $K_{2,2}$, we proof the invertibility of
\begin{equation*}
\sigma_{33}=X^x_3X^y_3\sigma_N
\end{equation*}
and its Schur complement in $K_{2,2}$ given by
\begin{equation*}
T=K_{2,2}/\sigma_{33}=\sigma_{22}-\sigma_{32}\left(\sigma_{33}\right)^{-1}\sigma_{23}.
\end{equation*}
Having this, we go backwards determining the inverse of $K_2$ and in the end of $K$.
For this purpose, we use that a matrix $A$ is invertible if
\begin{equation*}
\|I-A\|_{\infty} < 1,
\end{equation*} 
where $\|A\|_{\infty}=\max_{i}\sum_{j}|a_{i,j}|$. In this case the norm of the inverse is bounded by
\begin{equation*}
\|A^{-1}\|_{\infty}\leq \frac{1}{1-\|I-A\|_{\infty}}.
\end{equation*}

In the following Lemma we bound the norms of the corresponding entries in the kernel matrix $K$.
\begin{Lemma}\label{sup:bounds}
If the separation condition \eqref{sep:cond} is satisfied, we have for any $s \geq 6$ even, $N\geq2s$ with $C_{i,s}=124c_{i,s}\zeta(s-2)$ and $c_s=\frac{0.999}{2(s+1)}$ the estimations
\begin{align*}
&\left\|I-\sigma_{00}\right\|_{\infty}\leq \frac{C_{0,s}}{\nu^s},\quad \left\|\sigma_{00}^{-1}\right\|_{\infty}\leq \frac{1}{1-\frac{C_{0,s}}{\nu^s}},\\
&\left\|\sigma_{0i} \right\|_{\infty}, \quad \left\|\sigma_{i0} \right\|_{\infty} \leq \frac{C_{1,s}(N+1)}{\nu^s}, \quad\left\|\sigma_{ij}\right\|_{\infty} \leq \frac{C_{2,s}(N+1)^2}{\nu^s}, \quad \text{ for } i \neq j, i,j \neq0,\\
&\left\|-\tilde{\sigma}_N^{''}(0)I-\sigma_{ii}\right\|_{\infty} \leq \frac{C_{2,s}(N+1)^2}{\nu^s}, \quad  \left\| \sigma_{ii}^{-1}\right\|_{\infty} \leq \frac{1}{c_s(N+1)^2\left(1-\frac{C_{2,s}}{c_s\nu^s}\right)}.
\end{align*}
\end{Lemma}
\begin{proof}
The proof follows directly from applying Lemma \ref{offdiag:decay} together with the bound for $|\tilde{\sigma}_N^{''}(0)|$ given in Lemma \ref{2deriv:lowbound}.
\end{proof}

\begin{Lemma}\label{interpol:solution}
Suppose the separation condition \eqref{sep:cond} is satisfied, such that for $s \geq 6$ even and $N\geq2s$, there is a constant $b> 3 + \frac{c_s}{4}$, with
\begin{equation}\label{grow:qout}
\nu^s > b \frac{C_{2,s}}{c_s},
\end{equation}
where the constant $c_s$ is given in Lemma \ref{2deriv:lowbound} and $C_{2,s}$ in Lemma \ref{offdiag:decay}. Then the interpolation problem \eqref{interpol:eq} has a unique solution, such that the coefficients obey
\begin{equation*}
\|\alpha_0\|_{\infty}\leq 1+\frac{c_s}{4(b-3)-c_s}, \quad \|\alpha_j\|_{\infty} \leq \frac{2}{(4(b-3)-c_s)(N+1)},\quad  j=1,2,3.
\end{equation*}
Moreover, if $u_i=1$ we have the bound
\begin{equation*}
\alpha_{0,i}\geq 1-\frac{c_s}{4(b-3)-c_s}.
\end{equation*}
\end{Lemma}
\begin{proof}
In this proof the quotient $\frac{C_{2,s}}{c_s\nu^s}$ appears quite often, so we will denote it for abbreviation by
\begin{equation}
a_1:=\frac{C_{2,s}}{c_s\nu^s}.
\end{equation} 
Using Lemma \ref{sup:bounds} we have that
\begin{equation*}
\left\| \sigma_{33}^{-1}\right\|_{\infty} \leq \frac{1}{c_s(N+1)^2\left(1-a_1\right)}
\end{equation*}
and 
\begin{align*}
\left\|\tilde{\sigma}_N^{''}(0)I-K_{2,2}/\sigma_{33}\right\|_{\infty}&\leq \left\|\tilde{\sigma}_N^{''}(0)I-\sigma_{22}\right\|_{\infty}+
\left\|\sigma_{32}\right\|_{\infty}\left\|\sigma_{33}^{-1}\right\|_{\infty}\left\|\sigma_{23}\right\|_{\infty},\\
&\leq \frac{C_{2,s}(N+1)^2}{\nu^s}\left(1+\frac{C_{2,s}}{c_s\nu^s-C_{2,s}} \right). 
\end{align*}
This means
\begin{equation*}
\left\|I-\frac{K_{2,2}/\sigma_{33}}{\tilde{\sigma}_N^{''}(0)}\right\|_{\infty} \leq \frac{1}{|\tilde{\sigma}_N^{''}(0)|}\frac{C_s(N+1)^2}{\nu^s}\left(1+\frac{C_s}{c_s\nu^s-C_s} \right).
\end{equation*}
If the expression on the right hand side is smaller than $1$, which is the case if 
\begin{equation}\label{constr:a2}
a_2:=\frac{C_{2,s}}{c_s\nu^s}\left(1+\frac{C_{2,s}}{c_s\nu^s-C_{2,s}}\right)= \frac{C_{2,s}}{c_s\nu^s-C_{2,s}}=\frac{a_1}{1-a_1}<1,
\end{equation}
or equivalently
\begin{equation}
\nu^s > 2 \frac{C_{2,s}}{c_s},
\end{equation}
 we have
\begin{equation*}
\left\|\left(K_{2,2}/\sigma_{33}\right)^{-1}\right\|_{\infty}\leq \frac{1}{c_s(N+1)^2(1-a_2)}.
\end{equation*}

This shows the invertibility of $K_{2,2}$. Observe that we have with $T=K_{2,2}/\sigma_{33}$ the representation
\begin{equation}\label{inverse:22}
\left(K_{2,2}\right)^{-1}=
\begin{pmatrix}
T^{-1} & -T^{-1}\sigma_{32}\left(\sigma_{33}\right)^{-1}\\
-\left(\sigma_{33}\right)^{-1}\sigma_{23}T^{-1} & \left(\sigma_{33}\right)^{-1} + \left(\sigma_{33}\right)^{-1} \sigma_{23}T^{-1}\sigma_{32}\left(\sigma_{33}\right)^{-1}
\end{pmatrix}.
\end{equation}

 In the next step we show the invertibility of the Schur complement of $K_{2,2}$ in $K_2$, which is given by 
$K_2 / K_{2,2}=K_{2,0}-\tilde{K}_{2,1}K_{2,2}^{-1}K_{2,1}$. By the quotient formula for Schur
complements we can express $K_2/K_{2,2}$ as
\begin{equation*}
K_2 / K_{2,2}=(K_2/\sigma_{33})/(K_{2,2}/\sigma_{33}).
\end{equation*}  
Thus, we have to look at the matrix $K_2/\sigma_{33}$. Using the alternative partition of $K_2$, given by
\begin{equation*}
K_2=
\begin{pmatrix}
A & B \\
C & \sigma_{33}
\end{pmatrix},
\end{equation*}
with
\begin{align*}
A&=\begin{bmatrix}
\sigma_{11} & \sigma_{21} \\
\sigma_{12} & \sigma_{22} 
\end{bmatrix},\\
B&=\begin{bmatrix}
\sigma_{31} & \sigma_{32}  
\end{bmatrix}^T,\\
C&=\begin{bmatrix}
\sigma_{13} & \sigma_{23}  
\end{bmatrix},
\end{align*}
shows that we have
\begin{align}\label{calK_21}
K_2 / \sigma_{33}&= A-B\left(\sigma_{33}\right)^{-1}C,\nonumber \\
&=\begin{pmatrix}
\sigma_{11}-\sigma_{31}\left(\sigma_{33}\right)^{-1}\sigma_{13} &\sigma_{21}-\sigma_{31}\left(\sigma_{33}\right)^{-1}\sigma_{23}\\
\sigma_{12}-\sigma_{32}\left(\sigma_{33}\right)^{-1}\sigma_{13} &\sigma_{22}-\sigma_{32}\left(\sigma_{33}\right)^{-1}\sigma_{23}
\end{pmatrix},\nonumber \\
&=:\begin{pmatrix}
\mathcal{K}_{2,0} &\tilde{\mathcal{K}}_{2,1}\\
\mathcal{K}_{2,1} &K_{2,2}/\sigma_{33}
\end{pmatrix}.
\end{align}

This means
\begin{align*}
K_2 / K_{2,2}&=(K_2/\sigma_{33})/(K_{2,2}/\sigma_{33})\\
&=\sigma_{11}-\sigma_{31}\left(\sigma_{33}\right)^{-1}\sigma_{13}-\tilde{\mathcal{K}}_{2,1}\left(K_{2,2}/\sigma_{33}\right)^{-1}\mathcal{K}_{2,1}.
\end{align*}

So we can estimate using Lemma \ref{sup:bounds}
\begin{align*}
\left\|-\tilde{\sigma}_N^{''}(0)I-K_2/K_{2,2}\right\|_{\infty}&\leq \left\|\tilde{\sigma}_N^{''}(0)I-\sigma_{11}\right\| + \left\|\sigma_{31} \left(\sigma_{33}\right)^{-1}\sigma_{13} \right\|_{\infty}\\
&\quad +\left\|\tilde{\mathcal{K}}_{2,1}\left(K_{2,2}/\sigma_{33}\right)^{-1}\mathcal{K}_{2,1}\right\|_{\infty},\\
&\leq c_s a_2\left(1+\frac{a_2}{1-a_2} \right)=c_s\frac{a_2}{1-a_2},
\end{align*}
with $a_2$ given in \eqref{constr:a2}.
This means, together with the bound derived for $\left(K_{2,2}/\sigma_{33}\right)^{-1}$,
\begin{align*}
\left\|I-\frac{K_2/K_{2,2}}{-\tilde{\sigma}_N^{''}(0)}\right\|_{\infty}&\leq \frac{a_2}{1-a_2} = \frac{a_1}{1-2a_1},
\end{align*}
with $a_1=\frac{C_{2,s}}{c_s\nu^s}$, and $K_2/K_{2,2}$ is invertible if
\begin{align*}
a_3:=\frac{a_1}{1-2a_1} < 1
\end{align*}
or equivalently
\begin{equation*}
\nu^s > 3\frac{C_{2,s}}{c_s}.
\end{equation*}
We have
\begin{equation*}
\left\|\left(K_{2}/K_{2,2}\right)^{-1} \right\|_{\infty} \leq \frac{1}{c_s(N+1)^2(1-a_3)}.
\end{equation*}
This shows the invertibility of $K_2$. If we denote $S=K_2/K_{2,2}$, then the inverse is given by
\begin{align}\label{inverse:2}
K_2^{-1}&=\begin{pmatrix}
S^{-1} & -S^{-1}\tilde{K}_{2,1}K_{2,2}^{-1}\\
-K_{2,2}^{-1}K_{2,1}S^{-1} & K_{2,2}^{-1}+K_{2,2}^{-1}K_{2,1}S^{-1}\tilde{K}_{2,1}K_{2,2}^{-1} 
\end{pmatrix}.
\end{align}

 In the last step we apply the same procedure to show the invertibility of $R=K/K_2$. So, as seen before, we use the quotient rule
\begin{equation*}
K/K_2=(K/K_{2,2})/(K_{2}/K_{2,2}).
\end{equation*} 
To calculate $K/K_{2,2}$ we split $K$ into blocks as
\begin{equation*}
K=
\begin{pmatrix}
A & B \\
C & K_{2,2}
\end{pmatrix},
\end{equation*}
with
\begin{align*}
A&=\begin{bmatrix}
\sigma_{00} & \sigma_{10} \\
\sigma_{01} & \sigma_{11} 
\end{bmatrix},\\
B&=\begin{bmatrix}
\sigma_{20} &\sigma_{30}\\
\sigma_{21} &\sigma_{31}   
\end{bmatrix},\\
C&=\begin{bmatrix}
\sigma_{02} &\sigma_{12}\\
\sigma_{03} &\sigma_{13}  
\end{bmatrix},
\end{align*}
which leads to
\begin{equation*}
K/K_{2,2}=A-B\left(K_{2,2}\right)^{-1}C.
\end{equation*}

A lengthy calculation shows that we can write
\begin{equation*}
K/K_{2,2}=A-B\left(K_{2,2}\right)^{-1}C
=\begin{pmatrix}
\mathcal{K}_0 & \tilde{\mathcal{K}}_1\\
\mathcal{K}_1 & K_2/K_{2,2}
\end{pmatrix},
\end{equation*}
with
\begin{align*}
\mathcal{K}_0&=\left(\sigma_{00}-\sigma_{30}\left(\sigma_{33}\right)^{-1}\sigma_{03}\right)-\tilde{\mathcal{C}}_{2,1}T^{-1}\mathcal{C}_{2,1},\\
\tilde{\mathcal{K}}_{1}&=\tilde{\mathcal{G}}_{2,1}-\tilde{\mathcal{C}}_{2,1}T^{-1}\mathcal{K}_{2,1},\\
\mathcal{K}_{1} &=\mathcal{G}_{2,1}-\tilde{\mathcal{K}}_{2,1}T^{-1}\mathcal{C}_{2,1},
\end{align*}
where $\mathcal{K}_{2,1}$, $\tilde{\mathcal{K}}_{2,1}$ are given by \eqref{calK_21} and
\begin{align*}
\mathcal{C}_{2,1}&=\sigma_{02}-\sigma_{32}\left(\sigma_{33}\right)^{-1}\sigma_{03},\\
\tilde{\mathcal{C}}_{2,1}&=\sigma_{20}-\sigma_{30}\left(\sigma_{33}\right)^{-1}\sigma_{23},\\
\mathcal{G}_{2,1}&=\sigma_{01}-\sigma_{31}\left(\sigma_{33}\right)^{-1}\sigma_{03},\\
\tilde{\mathcal{G}}_{2,1}&=\sigma_{10}-\sigma_{30}\left(\sigma_{33}\right)^{-1}\sigma_{13}.
\end{align*}
This yields
\begin{equation*}
K/K_{2}=\mathcal{K}_{0}-\tilde{\mathcal{K}}_1\left(K_2/K_{2,2}\right)^{-1}\mathcal{K}_1,
\end{equation*}
and therefore
\begin{equation*}
\|I-K/K_{2}\|_{\infty} \leq \|I - \mathcal{K}_{0}\|_{\infty}+\|\tilde{\mathcal{K}}_1\|_{\infty}\|\mathcal{K}_1\|_{\infty}\left\|\left(K_2/K_{2,2}\right)^{-1}\right\|_{\infty}.
\end{equation*}
Observe, that we have the bounds 
\begin{equation*}
\|\mathcal{C}_{2,1}\|_{\infty}, \|\tilde{\mathcal{C}}_{2,1}\|_{\infty}, \|\mathcal{G}_{2,1}\|_{\infty},\|\tilde{\mathcal{G}}_{2,1}\|_{\infty} \leq \frac{C_{1,s}(N+1)}{\nu^s}(1+a_2)=\frac{C_{1,s}(N+1)}{\nu^s}\frac{1}{1-a_1}.
\end{equation*}

Using this we have that
\begin{align*}
\|I-\mathcal{K}_0\|_{\infty} & \leq \|I-\sigma_{00}\|_{\infty}+ \|\sigma_{30}\|_{\infty}\left\|\sigma_{03}\right\|_{\infty}\left\|\left(\sigma_{33}\right)^{-1}\right\|_{\infty}+\|\mathcal{C}_{2,1}\|_{\infty}\|\tilde{\mathcal{C}}_{2,1}\|_{\infty}\|T^{-1}\|_{\infty},\\
&\leq \frac{C_{0,s}}{\nu^s}+\left(\frac{C_{1,s}}{\nu^s}\right)^2\frac{2}{c_s(1-2a_1)},
\end{align*}
and similarly
\begin{equation*}
\|\tilde{\mathcal{K}}_1\|_{\infty},\quad \|\mathcal{K}_1\|_{\infty} \leq \frac{C_{1,s}(N+1)}{\nu^s}\frac{1}{1-2a_1}.
\end{equation*}

This results in
\begin{equation*}
\|I-K/K_{2}\|_{\infty} \leq \frac{C_{0,s}}{\nu^s}+ \left(\frac{C_{1,s}}{\nu^s} \right)^2\frac{3}{c_s(1-3a_1)}.
\end{equation*}
Thus $K/K_2$ is invertible if 
\begin{equation*}
a_4:=\frac{C_{0,s}}{\nu^s}+ \left(\frac{C_{1,s}}{\nu^s} \right)^2\frac{3}{c_s(1-3a_1)} <1.
\end{equation*}
Because of $2C_{0,s}\leq C_{1,s}\leq \frac{C_{2,s}}{2}$, we have the bound
\begin{equation*}
a_4\leq \frac{1}{4}c_s\frac{a_1}{1-3a_1},
\end{equation*}
and $a_4<1$, if 
\begin{equation*}
\nu^s > (3+\frac{c_s}{4})\frac{C_{2,s}}{c_s}.
\end{equation*}
Since $\nu^s > b \frac{C_{2,s}}{c_s}$ with $b > 3+\frac{c_s}{4}$, this is true and we have 
\begin{align*}
a_4 &\leq \frac{c_s}{4(b-3)},\\
 \left\|\left(K/K_2\right)^{-1}\right\|_{\infty} &\leq \frac{1}{1-a_4}\leq 1+\frac{c_s}{4(b-3)-c_s}.
\end{align*}
This gives the invertibility of $K$. With $R=K/K_2$ we have
\begin{equation*}
K^{-1}=
\begin{pmatrix}
R^{-1} & -R^{-1}\tilde{K}_1K_2^{-1}\\
-K_{2}^{-1}K_{1}R^{-1} & K_{2}^{-1}+K_{2}^{-1}K_1R^{-1}\tilde{K}_1K_{2}^{-1} 
\end{pmatrix}.
\end{equation*}

In reference to the interpolation condition \eqref{interpol:eq}, we have the representation of the coefficients
\begin{equation*}
\alpha=\begin{pmatrix}
I\\
-K_2^{-1}K_1
\end{pmatrix}
\left(K/K_2\right)^{-1}u,
\end{equation*} 
so we have to calculate the product $-K_2^{-1}K_1$. Using the representations \eqref{inverse:2} and \eqref{inverse:22} of the inverse of $K_2$ resp. $K_{2,2}$, a lengthy calculation shows
\begin{align*}
\alpha_0&=\left(K/K_2\right)^{-1}u,\\
\alpha_1&=S^{-1}(\mathcal{G}_{2,1}-\tilde{\mathcal{K}}_{2,1}T^{-1}\mathcal{C}_{2,1})\left(K/K_2\right)^{-1}u,\\
&=S^{-1}(\mathcal{G}_{2,1}-\tilde{\mathcal{K}}_{2,1}T^{-1}\mathcal{C}_{2,1})\alpha_0,\\
\alpha_2&=T^{-1}\left(\mathcal{C}_{2,1}-\mathcal{K}_{2,1}S^{-1}(\mathcal{G}_{2,1}-\tilde{\mathcal{K}}_{2,1}T^{-1}\mathcal{C}_{2,1})\right)\left(K/K_2\right)^{-1}u,\\
&=T^{-1}(\mathcal{C}_{2,1}\alpha_0-\mathcal{K}_{2,1}\alpha_1),\\
\alpha_3&=\left(\sigma_{33}\right)^{-1}\Big[\sigma_{03}-\sigma_{23}T^{-1}\left(\mathcal{C}_{2,1}-\mathcal{K}_{2,1}S^{-1}(\mathcal{G}_{2,1}-\tilde{\mathcal{K}}_{2,1}T^{-1}\mathcal{C}_{2,1})\right)\\
&\quad -\sigma_{13}S^{-1}(\mathcal{G}_{2,1}-\tilde{\mathcal{K}}_{2,1}T^{-1}\mathcal{C}_{2,1})\Big]\left(K/K_2\right)^{-1}u,\\
&=\left(\sigma_{33}\right)^{-1}\left(\sigma_{03}\alpha_0-\sigma_{23}\alpha_2-\sigma_{13}\alpha_1\right).
\end{align*}
Together with the observation that $\frac{C_{1,s}}{C_{2,s}}\leq\frac{1}{2}$, we can estimate the norm of the coefficients by
\begin{align*}
\|\alpha_0\|_{\infty}&\leq \|\left(K/K_2\right)^{-1}\|_{\infty}\leq 1+\frac{c_s}{4(b-3)-c_s},\\
\|\alpha_1\|_{\infty}&\leq \|S^{-1}\|_{\infty}(\|\mathcal{G}_{2,1}\|_{\infty}+\|\tilde{\mathcal{K}}_{2,1}\|_{\infty}\|\mathcal{C}_{2,1}\|_{\infty}\|T^{-1}\|_{\infty})\|\alpha_0\|_{\infty},\\
&\leq \frac{2}{(4(b-3)-c_s)(N+1)},\\
\|a_2\|_{\infty}&\leq \|T^{-1}\|_{\infty}(\|\mathcal{C}_{2,1}\|_{\infty}\|\alpha_0\|_{\infty}+\|\mathcal{K}_{2,1}\|_{\infty}\|\alpha_1\|_{\infty}),\\
&\leq \frac{2}{(4(b-3)-c_s)(N+1)},\\
\|\alpha_3\|_{\infty} &\leq \left\|\left(\sigma_{33}\right)^{-1}\right\|_{\infty}\left(\|\sigma_{03}\|_{\infty}\|\alpha_0\|_{\infty}+\|\sigma_{23}\|_{\infty}\|\alpha_2\|_{\infty}+\|\sigma_{13}\|_{\infty}\|\alpha_1\|_{\infty}\right),\\
&\leq \frac{2}{(4(b-3)-c_s)(N+1)}.
\end{align*}
Moreover, if $u_i=1$ then we have the bound
\begin{align*}
\alpha_{0,i}&=\left(I-\left(I-\left(K/K_2\right)^{-1}\right)u\right)_i,\\
&= u_i-\left(\left(I-\left(K/K_2\right)^{-1}\right)u\right)_i,\\
&\geq 1-\|I-K/K_2\|_{\infty}\|\left(K/K_2\right)^{-1}\|,\\
&\geq 1-\frac{a_4}{1-a_4}\geq 1-\frac{c_s}{4(b-3)-c_s}.
\end{align*}
\end{proof}

\begin{corollary}\label{interpol:sol2}
Suppose the interpolation points $\mathcal{C}=\{x_1,\dots,x_M\}$ obey the separation condition
\begin{equation}\label{sep:cond2}
\min_{x_i\neq x_j} \omega(x_j^{-1}x_i) \geq \frac{36}{N+1}
\end{equation}
for $N \geq 20$. Then the interpolation problem has a unique solution $q \in \Pi_N$.
\end{corollary}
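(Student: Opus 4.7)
The plan is to invoke Lemma \ref{interpol:solution} with a careful numerical choice of the even integer $s$. That lemma requires $s \geq 6$ even and $N \geq 2s$, and the corollary assumes $N \geq 20$, so the natural choice is $s = 10$. The corollary then reduces to verifying the quantitative inequality
\begin{equation*}
\nu^{s} \,>\, b\,\frac{C_{2,s}}{c_s}
\end{equation*}
for $\nu = 36$, $s = 10$ and some $b > 3 + c_s/4$. Note also that $\nu = 36 \geq \pi$, so the separation hypothesis of all preceding lemmas is met.

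First I would substitute the known constants. From Lemma \ref{2deriv:lowbound}, $c_{10} = 0.999/22$, so $3 + c_{10}/4 < 3.012$; any choice $b \geq 4$ therefore clears the lower bound on $b$ with room to spare. From Lemma \ref{offdiag:decay}, $C_{2,10} = 124\, c_{2,10}\,\zeta(8)$, with $\zeta(8) \approx 1.0041$ and $c_{2,10}$ the explicit constant coming from the B-spline based Theorem \ref{trig:est} in the appendix. After rearrangement, the required inequality becomes
\begin{equation*}
36^{10} \,>\, \frac{4 \cdot 124 \cdot \zeta(8) \cdot 22}{0.999}\, c_{2,10},
\end{equation*}
whose left side is roughly $3.66 \times 10^{15}$ while the coefficient on the right is of order $10^{4}$. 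Since $c_{2,10}$ is of moderate size, the estimate holds with a comfortable margin.

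The main obstacle is purely computational: one must pin down the explicit numerical value of $c_{2,10}$ from the B-spline construction and confirm that the threshold it produces is no larger than $36$ (presumably $\nu = 36$ is chosen as a round integer exceeding the sharpest value the construction yields for $s = 10$). Once this arithmetic is in hand, Lemma \ref{interpol:solution} directly produces a unique coefficient vector $\alpha$ solving the linear system (\ref{interpol:eq}). The associated kernel expansion then defines the unique $q \in \Pi_N$ satisfying the Hermite interpolation conditions, which is exactly the content of the corollary.
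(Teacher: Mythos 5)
Your plan is the same as the paper's: invoke Lemma~\ref{interpol:solution} by numerically verifying condition~\eqref{grow:qout}, and for that purpose $\nu=36$, $N\ge 20$ forces $N\ge 2s$ with $s\in\{8,10\}$ (keeping in mind that $c_s$ in Lemma~\ref{2deriv:lowbound} actually needs $s\ge 8$, which both satisfy). You pick $s=10$, $b=4$; the paper picks $s=8$, $b=28$. Both choices make $\nu^s>b\,C_{2,s}/c_s$ hold at $\nu=36$, so your argument does establish the corollary --- but a couple of points are worth flagging.

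First, your description of $c_{2,10}$ as ``of moderate size'' is misleading: by Theorem~\ref{trig:est}, $c_{2,10}=1.02\cdot 9!\cdot 2^{10}\cdot 41\approx 1.5\times 10^{10}$. The inequality $36^{10}>b\,C_{2,10}/c_{10}$ holds not because $c_{2,10}$ is small, but because $36^{10}\approx 3.7\times 10^{15}$ dwarfs the right-hand side ($\approx 1.7\times 10^{14}$). In contrast, with the paper's $s=8$, $b=28$ one gets $36^8\approx 2.82\times 10^{12}$ versus $28\,C_{2,8}/c_8\approx 2.76\times 10^{12}$: $\nu=36$ is chosen to be \emph{just barely} sufficient for that parameter set, not for $s=10$.

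Second --- and this is where the choice matters --- the value of $s$ fixes the kernel $\sigma_N$ (via the B-spline of order $s-1$), and $b$ controls the coefficient bounds $\|\alpha_0\|_\infty\leq 1+\frac{c_s}{4(b-3)-c_s}$, $\|\alpha_j\|_\infty\leq\frac{2}{(4(b-3)-c_s)(N+1)}$ that Lemmas~\ref{defi:Hess} and~\ref{away:decay} later need to be small enough for the sign-definiteness of the Hessian and the bound $|q(x)|<1$. Your $b=4$ gives much weaker coefficient bounds and a different kernel, so the subsequent parts of the construction would not go through as written. For the isolated statement of the corollary your $s=10$, $b=4$ verification is fine; as part of the whole argument of the paper you would want $s=8$, $b=28$.
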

\begin{proof}
It can be checked that the condition \eqref{grow:qout} is fulfilled for the parameters $\nu=36$, $b=28$ and $s=8$. 
\end{proof}

\subsection{Bound for the Interpolant}
Next to the interpolation conditions, guaranteed by Corollary \ref{interpol:sol2}, we have to show that the interpolant $q$ fulfills the second assumption of \eqref{interpol:cond1} in Theorem \ref{exact:recdual}, namely $|q(x)|<1$ for $x$ not being an interpolation point. We split the proof into those $x\in SO(3)$, that are close to an interpolation point, which is Lemma \ref{defi:Hess}, and those that are well separated, governed by Lemma \ref{away:decay}.
\begin{Lemma}\label{defi:Hess}
Suppose the separation condition \eqref{sep:cond2} is satisfied for $N \geq 20$.
Then for all $x\in SO(3)$, such that there is a $x_m$ with $\omega(x^{-1}_mx)\leq \frac{\pi}{2(N+1)}$, we have for the interpolating function $q$ of Lemma \ref{interpol:solution} for $s=8$
\begin{equation*}
|q(x)| <1.
\end{equation*}
\end{Lemma}
\begin{proof}
The proof is based on a concavity respectively a convexity argument. We show that in the prescribed neighbourhood of an interpolation point $x_m$ the Hessian is negative definite in the case $u_m=1$ and positive definite in the case $u_m=-1$, using the Theorem of Gerschgorin. 
For this, first observe that the Hessian matrix for a function $f$ is given by 
\begin{equation}
Hf=
\begin{pmatrix}
X_1X_1f & X_1X_2f & X_1X_3f\\
X_2X_1f & X_2X_2f & X_2X_3f\\
X_3X_1f & X_3X_2f & X_3X_3f
\end{pmatrix}
-\frac{1}{2}\begin{pmatrix}
0 & X_3f & -X_2f\\
-X_3f & 0 & X_1f\\
X_2f & -X_1f & 0
\end{pmatrix}.
\end{equation}
If we apply this to the function constructed from the interpolation problem
\begin{equation*}
q(x)=\sum_{j=1}^M\alpha_{j,0}\sigma_N(x,x_j)+ \alpha_{j,1}X_1^y\sigma_N(x,x_j)+  \alpha_{j,2}X_2^y\sigma_N(x,x_j)+  \alpha_{j,3}X_3^y\sigma_N(x,x_j),
\end{equation*}
the diagonal entries of the matrix are given by
\begin{align*}
X_i^xX_i^xq(x)&=\sum_{j=1}^M\alpha_{j,0}X_i^xX_i^x\sigma_N(x,x_j)+ \alpha_{j,1}X_i^xX_i^xX_1^y\sigma_N(x,x_j)+  \alpha_{j,2}X_i^xX_i^xX_2^y\sigma_N(x,x_j)\\
&\quad+  \alpha_{j,3}X_i^xX_i^xX_3^y\sigma_N(x,x_j).
\end{align*}
For the estimates of the entries of the Hessian, we use Lemma \ref{local:kernelall2}, \ref{lip:est}, \ref{offdiag:decay}, \ref{interpol:solution} and \ref{2deriv:lowbound} with the following parameters $s=8$, $\nu=36$, $b=28$ and $\delta=\frac{\pi}{2}$.

We assume that $u_m=1$, since the estimates for $u_m=-1$ are completely analog. The first step is to show, that the diagonal entries are negative.
For this we estimate
\begin{align*}
X_i^xX_i^xq(x)&\leq \alpha_{0,m}X_i^xX_i^x\sigma_N(x,x_m)+ \sum_{n=1}^3\|\alpha_n\|_{\infty}\left|X_i^xX_i^xX_n^y\sigma_N(x,x_m)\right|\\
&\quad+\|\alpha_0\|_{\infty}\sum_{x_j\neq x_m}\left|X_i^xX_i^x\sigma_N(x,x_j)\right|
+\sum_{n=1}^3\|\alpha_n\|_{\infty}\sum_{x_j\neq x_m}\left|X_i^xX_i^xX_n^y\sigma_N(x,x_j)\right| 
\end{align*} 
The first term can be estimated using the bounds of Lemma \ref{2deriv:lowbound} and Lemma \ref{interpol:solution} by
\begin{align*}
\alpha_{0,m}X_i^xX_i^x\sigma_N(x,x_m)&=\alpha_{0,m}X_i^xX_i^x\sigma_N(x,x)+\alpha_{0,m}\left(X_i^xX_i^x\sigma_N(x,x_m)-X_i^xX_i^x\sigma_N(x,x)\right),\\
&=\alpha_{0,m}\tilde{\sigma}_N^{''}(0)+\alpha_{0,m}\left(X_i^xX_i^x\sigma_N(x,x_m)-\tilde{\sigma}_N^{''}(0)\right),\\
&\leq -c_s(N+1)^2\left(1-\frac{c_s}{4(b-3)-c_s}\right)\\
&\quad +\left(1+\frac{c_s}{4(b-3)-c_s}\right)\left(X_i^xX_i^x\sigma_N(x,x_m)-\tilde{\sigma}_N^{''}(0)\right).
\end{align*}
By Lemma \ref{lip:est} we have the estimation
\begin{equation*}
\left(X_i^xX_i^x\sigma_N(x,x_m)-\tilde{\sigma}_N^{''}(0)\right)\leq \frac{\tilde{d}_s}{2}(N+1)^2\delta^2, \quad \omega \in [0, \frac{\delta}{N+1}],
\end{equation*}
which yields 
\begin{align*}
\alpha_{0,m}X_i^xX_i^x\sigma_N(x,x_m)
&\leq -(N+1)^2\left(1+\frac{c_s}{4(b-3)-c_s}\right)c_s\left(1-\frac{c_s}{2(b-3)}-\frac{\delta^2\tilde{d}_s}{2c_s}\right).
\end{align*}
In addition, we have again using Lemma \ref{lip:est} and Lemma \ref{interpol:solution}
\begin{align*}
\sum_{n=1}^3\|\alpha_n\|_{\infty}\left|X_i^xX_i^xX_n^y\sigma_N(x,x_m)\right|
&\leq  \tilde{d}_s\frac{6(N+1)^2\delta}{4(b-3)-c_s}\left(1+\frac{\delta}{4(2s+1)}+\frac{\tilde{c_s}}{4\tilde{d_s}(2s+1)^2}\right),
\end{align*}
as well as using Lemma \ref{offdiag:decay} with $a_{\delta/\nu}=\min\{\frac{27}{124}(1-\frac{\delta}{\nu})^{-s}+1,(1-\frac{\delta}{\nu})^{-s}\}$ and the assumption $\nu^s>b\frac{C_{2,s}}{c_s}$
\begin{align*}
\|\alpha_0\|_{\infty}\sum_{x_j\neq x_m}\left|X_i^xX_i^x\sigma_N(x,x_j)\right|
&\leq \left(1+\frac{c_s}{4(b-3)-c_s}\right)\frac{c_sa_{\delta/\nu}(N+1)^2}{b}
\end{align*}
and
\begin{align*}
\sum_{n=1}^3\|\alpha_n\|_{\infty}\sum_{x_j\neq x_m}\left|X_i^xX_i^xX_n^y\sigma_N(x,x_j)\right|
&\leq\left(1+\frac{c_s}{4(b-3)-c_s}\right)\frac{27c_sa_{\delta/\nu}}{4(b-3)}\frac{(N+1)^2}{b}.
\end{align*}
Inserting the parameters we find
\begin{align*}
X_i^xX_i^xq(x)\leq -0.041\cdot(N+1)^2. 
\end{align*}

For the off-diagonal entries of the Hessian matrix we have
\begin{align*}
X_j^xX_i^xq\mp \frac{1}{2}X_n^xq&=\sum_{j=1}^M\alpha_{j,0}\left(X_j^xX_i^x\sigma_N(x,x_j)\mp\frac{1}{2}X_n^x\sigma_N(x,x_j)\right)\\
&\quad +\sum_{k=1}^3 \alpha_{j,k}\left(X_j^xX_i^xX_k^y\sigma_N(x,x_j)\mp\frac{1}{2}X_n^xX_k^x\sigma_N(x,x_j)\right)  
\end{align*}
and therefore we can estimate
\begin{align*}
|X_j^xX_i^xq\mp \frac{1}{2}X_n^xq|&\leq \|\alpha_{0}\|_{\infty}\left|X_j^xX_i^x\sigma_N(x,x_m)\mp\frac{1}{2}X_n^x\sigma_N(x,x_m)\right|\\
&\quad + \sum_{k=1}^3\|\alpha_k\|_{\infty}\left|X_j^xX_i^xX_k^y\sigma_N(x,x_m)\mp\frac{1}{2}X_n^xX_k^x\sigma_N(x,x_m)\right|\\
&\quad +\|\alpha_{0}\|_{\infty}\sum_{x_j\neq x_m}\left|X_j^xX_i^x\sigma_N(x,x_j)\mp\frac{1}{2}X_n^x\sigma_N(x,x_j)\right|\\
&\quad +\sum_{k=1}^3\|\alpha_{k}\|_{\infty}\sum_{x_j \neq x_m}\left|X_j^xX_i^xX_k^y\sigma_N(x,x_j)\mp\frac{1}{2}X_n^xX_k^x\sigma_N(x,x_j)\right|.
\end{align*}
Lemma \ref{lip:est} and Lemma \ref{interpol:solution} yield
\begin{align*}
\|\alpha_{0}\|_{\infty}\left|X_j^xX_i^x\sigma_N(x,x_m)\mp\frac{1}{2}X_n^x\sigma_N(x,x_m)\right|&\leq \left(1+\frac{c_s}{4(b-3)-c_s}\right)(N+1)^2\frac{\tilde{d}_s}{4}\delta^2
\end{align*}
as well as
\begin{align*}
&\sum_{k=1}^3\|\alpha_k\|_{\infty}\left|X_j^xX_i^xX_k^y\sigma_N(x,x_m)\mp\frac{1}{2}X_n^xX_k^x\sigma_N(x,x_m)\right|\\
&\leq \tilde{d}_s\frac{6(N+1)^2\delta}{4(b-3)-c_s}\left(1+\frac{\delta}{4(2s+1)}+\frac{\tilde{c_s}}{4\tilde{d_s}(2s+1)^2}\right).
\end{align*}
Furthermore we have applying Lemma \ref{local:kernelall2}
\begin{align*}
&\|\alpha_{0}\|_{\infty}\sum_{x_j\neq x_m}\left|X_j^xX_i^x\sigma_N(x,x_j)\mp\frac{1}{2}X_n^x\sigma_N(x,x_j)\right|\\
&\leq \left(1+\frac{c_s}{4(b-3)-c_s}\right)\frac{c_sa_{\delta/\nu}(N+1)^2}{b}
\end{align*}
and
\begin{align*}
&\sum_{k=1}^3\|\alpha_{k}\|_{\infty}\sum_{x_j \neq x_m}\left|X_j^xX_i^xX_k^y\sigma_N(x,x_j)\mp\frac{1}{2}X_n^xX_k^x\sigma_N(x,x_j)\right|\\
&\leq\left(1+\frac{c_s}{4(b-3)-c_s}\right)\frac{27c_sa_{\delta/\nu}}{4(b-3)}\frac{(N+1)^2}{b}.
\end{align*}
 Inserting the parameters results in
 \begin{align*}
 |X_j^xX_i^xq\mp \frac{1}{2}X_n^xq| \leq 0.01 \cdot (N+1)^2.
 \end{align*}
 Since
 \begin{equation*}
 |X_i^xX_i^xq(x)| > 2 |X_j^xX_i^xq\mp \frac{1}{2}X_n^xq|, 
 \end{equation*}
 and $X_i^xX_i^xq(x) < 0$ for $i=1,2,3$, we have that the Hessian matrix is negative definite at $x$.  
 
 The definitness shows $q(x) < 1$, to show $q(x)> -1$ observe that
 \begin{align*}
 q(x) &\geq \alpha_{0,m}\sigma_N(x,x_m)-\sum_{k=1}^3\|\alpha_k\|_{\infty}|X_k^y\sigma_N(x,x_m)|-\|\alpha_0\|_{\infty}\sum_{x_j \neq x_m}|\sigma_N(x,x_j)|\\
 &\quad - \sum_{k=1}^3\|\alpha_k\|_{\infty}\sum_{x_j \neq x_m}|X_k^x\sigma_N(x,x_j)|.
 \end{align*}
 We have, using the Taylor expansion of cosine at zero,
 \begin{align*}
 \sigma_N(x,x_m)&\geq 1-\frac{\tilde{c}_s}{2}\delta^2
 \end{align*}
 and
 \begin{align*}
|X_k^y\sigma_N(x,x_m)|
&\leq\tilde{c}_s(N+1)\delta.
 \end{align*}
Using Theorem \ref{offdiag:decay} yields 
 \begin{align*}
 \sum_{x_j \neq x_m}|\sigma_N(x,x_j)|&\leq \frac{C_{0,s}a_{\delta/\nu}}{\nu^s}\leq \frac{c_sa_{\delta/\nu}}{4b},\\
 \sum_{x_j \neq x_m}|X_k^x\sigma_N(x,x_j)|&\leq\frac{C_{1,s}a_{\delta/\nu}(N+1)}{\nu^s}\leq\frac{c_sa_{\delta/\nu}(N+1)}{2b}.
 \end{align*}
 Therfore with Lemma \ref{interpol:solution}u and inserting the paramters
 \begin{align*}
 q(x)&\geq 0.92. 
 \end{align*}
\end{proof}

\begin{Lemma}\label{away:decay}
Under the assumptions of Lemma \ref{defi:Hess}, we have that for all $x \in SO(3)$ with $\omega(x_m^{-1}x)\geq \frac{\pi}{2(N+1)}$ for all $x_m \in \mathcal{C}$ the interpolating function $q$ of Lemma \ref{interpol:solution} with $s=8$ fulfils  
\begin{equation*}
|q(x)| <1.
\end{equation*}
\end{Lemma}
\begin{proof}
 We can estimate
\begin{align}\label{q:est} 
|q(x)| &\leq \|\alpha_{0}\|_{\infty}|\sigma_N(x,x_m)|+\sum_{k=1}^3\|\alpha_k\|_{\infty}|X_k^y\sigma_N(x,x_m)|+\|\alpha_0\|_{\infty}\sum_{x_j \neq x_m}|\sigma_N(x,x_j)| \nonumber\\
 &\quad + \sum_{k=1}^3\|\alpha_k\|_{\infty}\sum_{x_j \neq x_m}|X_k^x\sigma_N(x,x_j)|.
 \end{align}
First assume that there is an $x_m$, such that $\omega(x_m^{-1}x) \leq \frac{2.45\pi}{N+1}$.
Using the Taylor expansion of the cosine function at zero, we have with Lemma \ref{2deriv:lowbound}
\begin{align*}
\sigma_N(x,x_m)&\geq 1-\frac{|\tilde{\sigma}_N^{(2)}(0)|}{2}\omega^2+\frac{|\tilde{\sigma}_N^{(4)}(0)|}{24}\omega^4-\frac{|\tilde{\sigma}_N^{(6)}(0)|}{6!}\omega^6,\\
&\geq 1-\frac{1.001}{36}(N+1)^2\omega^2+\frac{0.999}{24\cdot120}(N+1)^4\omega^4-\frac{1.011}{6!\cdot 11 \cdot 48}(N+1)^6\omega^6.
\end{align*}
It can be shown that the polynomial
\begin{equation*}
1-\frac{1.001}{36}t^2+\frac{0.999}{24\cdot120}t^4-\frac{1.011}{6!\cdot 11 \cdot 48}t^6
\end{equation*}
is positive for $t \in [0,2.45\pi]$ and therefore
\begin{align}\label{tay:upbound}
|\sigma_N(x,x_m)|&=\sigma_N(x,x_m),\nonumber \\
&\leq 1-\frac{|\tilde{\sigma}_N^{(2)}(0)|}{2}\omega^2+\frac{|\tilde{\sigma}_N^{(4)}(0)|}{24}\omega^4,\nonumber \\ 
&\leq1-\frac{1.001}{36}(N+1)^2\omega^2+\frac{0.999}{24\cdot120}(N+1)^4\omega^4.
\end{align}
The r.h.s of \eqref{tay:upbound} is strictly monotonic decreasing for $\omega \in \left[\frac{\pi}{2(N+1)},\frac{t_0}{(N+1)}\right]$ with $t_0=2\sqrt{10\cdot\frac{1.001}{0.999}}$ and strictly increasing for $\omega \in \left[\frac{t_0}{(N+1)},\frac{2.45\pi}{(N+1)}\right]$.

Moreover, we can estimate
\begin{equation}
|X_k^y\sigma_N(x,x_m)| \leq \tilde{c}_s(N+1)t,
\end{equation}
as well as using Lemma \ref{offdiag:decay}
 \begin{align*}
 \sum_{x_j \neq x_m}|\sigma_N(x,x_j)|&\leq \frac{c_sa_{t/\nu}}{4b},\\
 \sum_{x_j \neq x_m}|X_k^x\sigma_N(x,x_j)|&\leq\frac{c_sa_{t/\nu}(N+1)}{2b},
 \end{align*}
 for $t=(N+1)\omega$. Inserting the values $b=28$, $\nu=36$ and $s=8$ for the bounds of the coefficients in Lemma \ref{interpol:solution} results in
\begin{align*}
|q(x)| &\leq 0.96,\quad \text{for } \omega(x_m^{-1}x) \in \left[\frac{\pi}{2(N+1)},\frac{t_0}{(N+1)}\right],\\
|q(x)| &\leq 0.60, \quad \text{for } \omega(x_m^{-1}x) \in \left[\frac{t_0}{(N+1)},\frac{2.45\pi}{(N+1)}\right].
\end{align*}
If there is a $x_m$ such that $\frac{2.45\pi}{N+1} \leq \omega(x_m^{-1}x) \leq \frac{18}{N+1}$, we can estimate in a similar way, but instead of the Taylor expansion we use the asymptotic bounds of Theorem \ref{local:kernelall}, i.e.
\begin{align*}
|\sigma_N(x,x_m)| &\leq \frac{c_{0,8}}{(N+1)^8\omega^8},\\
|X_k^y\sigma_N(x,x_m)| &\leq \frac{c_{1,8}}{(N+1)^7\omega^8}.
\end{align*}
This results in
\begin{equation*}
|q(x)| \leq 0.99
\end{equation*}
for $\frac{2.45\pi}{N+1} \leq \omega(x_m^{-1}x) \leq \frac{18}{N+1}$.
 
 
In the case $\omega(x_j^{-1}x) \geq \frac{18}{(N+1)}$ for all $x_j$, we derive with Lemma \ref{interpol:solution} and estimations similar to those of Lemma \ref{offdiag:decay}
\begin{align*}
|q(x)| &\leq \|\alpha_0\|_{\infty}\sum_{x_j}|\sigma_N(x,x_j)|+\sum_{k=1}^3\|\alpha_k\|_{\infty}\sum_{x_j}|X_k^y\sigma_N(x,x_j)|,\\ 
&\leq \frac{c_sa_{1/2}}{4(b-3)-cs}\leq 0.032.
\end{align*}
\end{proof}

\section{Conclusion and Outlook}\label{sec:conclu}
In this paper, we have shown that under a separation condition on the support of a sum of Dirac measures this measure is the unique solution of the minimization problem \eqref{opti:prob}. Given moments with respect to Wigner D-functions up to order $N$, the sought measure is the unique minimizer if the support set obeys a minimal separation of $\frac{36}{N+1}$, as long as $N\geq 20$. We expect, that this factor is not optimal.
The proof of the uniqueness follows the work of \emph{Cand\'es and Fernandez-Granda} \cite{Candes:2014} and \emph{Bendory et.al.} \cite{Bendory:2015} and seems to be a quite general idea of constructing a dual certificate. Nevertheless, the actual construction heavily depends on precise localization estimates for 'polynomial' interpolation kernels and its derivatives, which have to be verified in a given setting. Moreover we would like to point out, that this is only one way to construct a dual certificate. A different construction, that involves algebraic techniques, was shown by \emph{von der Ohe et.al.} in \cite{vOhe:2016} for Fourier coefficients on the $d-$dimensional Torus and in \cite{vOhe:2016a} for spherical harmonic coefficients on the $d-$dimensional sphere.

This work focussed on the theoretical recovery of the sought measure as the unique solution of the minimization problem \eqref{opti:prob}. The actual computation of the solution, i.e. turning the \emph{infinite}-dimensional minimization \eqref{opti:prob} into a \emph{tractable} problem, is beyond the scope of this paper and will be part of future work.

\appendix
\section{Localized Trigonometric Polynomials}\label{app:loctrig}
In \cite{Mhaskar:2000} and also in \cite{Potts:2007} it was shown that a trigonometric polynomial of the form
\begin{equation*}
\sum_{k=-N}^Ng\left( \frac{k}{2(N+1)}\right)\e^{ikt}
\end{equation*} 
obeys a localization property, as long as the function $g$ is sufficiently smooth with derivatives of \emph{bounded variation}. The variation of a function $f$ defined on $\left[-\frac{1}{2},\frac{1}{2}\right]$ is given by
\begin{equation*}
|f|_V:=\sup\left\{\sum_{i=1}^{n-1}|f(t_{i+1})-f(t_i)|\right\},
\end{equation*}
where the supremum is taken over all partitions $(t_i)_{i=1}^n$ of the interval $[-\frac{1}{2},\frac{1}{2}]$. The space of $(s-1)$-times differentiable functions $g$ with compact support in $\left[-\frac{1}{2},\frac{1}{2}\right]$, such that $|g^{(s-1)}|_V < \infty$, will be denoted as $\mathcal{BV}^{s-1}_0([-\frac{1}{2},\frac{1}{2}])$. Equipped with these definitions we have for $g \in\mathcal{BV}^{s-1}_0([-\frac{1}{2},\frac{1}{2}])$, see \cite[Lemma 3.2]{Potts:2007},
\begin{equation}\label{trigloc:est}
\left| \sum_{k=-N}^Ng\left(\frac{k}{2(N+1)}\right)\e^{ikt} \right| \leq \frac{(2^s-1)\zeta(s)|g^{(s-1)}|_V}{(4(N+1))^{s-1}|t|^{s}},
\end{equation}
for $t \in (0,\pi]$ with $N\geq s-1 \geq 1$. Here $\zeta(s)=\sum_{k=1}^{\infty}\frac{1}{n^s}$ denotes the Riemannian zeta function. 
 
In addition for positive $g \in \mathcal{BV}_0^{s-1}([-\frac{1}{2},\frac{1}{2}])$ one has the bounds, see \cite[Lemma $3.2$]{Potts:2007}.
\begin{equation}\label{l1norm:est}
\left(\|g\|_1-\frac{2\zeta(s)}{(2N\pi )^s}|g^{(s-1)}|_V\right)\leq \frac{\|g\|_{1,N}}{2(N+1)}\leq \left(\|g\|_1+\frac{2\zeta(s)}{(2N\pi )^s}|g^{(s-1)}|_V\right).
\end{equation}
This leads to explicit constants for localization results of the trigonometric polynomial
\begin{equation*}
\tilde{\sigma}_N(t)=\frac{1}{\|g\|_{1,N}}\sum_{k=-N}^Ng\left(\frac{k}{2(N+1)}\right)\e^{ikt},
\end{equation*}
as long as one can compute the $L^1-$norm of the filter function and the total variation of its $(s-1)$-th derivative.

The $l-$th derivative is given by
\begin{equation*}
\tilde{\sigma}_N^{(l)}(t)=\frac{(2(N+1)i)^l}{\|g\|_{1,N}}\sum_{k=-N}^N\left(\frac{k}{2(N+1)}\right)^lg\left(\frac{k}{2(N+1)}\right)\e^{ikt}.
\end{equation*}
Thus, to achieve an analog localization result for the derivatives, we have to estimate the total variation of the function $(z^lg_s(z))^{(s-1)}$. In view of
\begin{equation*}
(z^lg(z))^{(s-1)}=\sum_{n=0}^{l}\binom{s-1}{n}(z^l)^{(n)}g^{(s-1-n)},
\end{equation*}
and
\begin{equation*}
|uv|_{V} \leq \|u\|_{\infty}|v|_{V}+\|v\|_{\infty}|u|_V,
\end{equation*}
for two functions $u$ and $v$ we get for $l \leq s-1$

\begin{align*}
|(z^lg(z))^{(s-1)}|_V
&\leq \sum_{n=0}^l\frac{l!}{(l-n)!}\binom{s-1}{n} \left(\|z^{l-n}\|_{\infty}|g^{(s-1-n)}|_V+\|g^{(s-1-n)}\|_{\infty}|z^{l-n}|_V\right).
\end{align*}
Since $\|z^{l-n}\|_{\infty}=\frac{1}{2^{l-n}}$ and $|z^{l-n}|_V=\frac{1}{2^{l-n-1}}$ for $n< l$ we have
\begin{align}\label{prodvar}
|(z^lg(z))^{(s-1)}|_V&\leq \sum_{n=0}^{l-1}\frac{l!}{(l-n)!}\binom{s-1}{n}\frac{1}{2^{l-n}} \left(|g^{(s-1-n)}|_V+2\|g^{(s-1-n)}\|_{\infty}\right)\\
&\quad+l!\binom{s-1}{l}|g^{(s-1-l)}|_V.\nonumber
\end{align}

To be able to compute the corresponding norms and variations we will choose a specific kernel. We seek for a $(s-1)$-smooth function, whose $(s-1)$-th derivative has small total variation. This leads to functions, whose $(s-1)$-th derivative is piecewise constant. One way to construct such a function is to use a \emph{B-spline} function of order $s-1$, see e.g. \cite{Kunis:2008}. We will use the so called \emph{perfect B-spline} of order $s-1$ as filter function, since in this case $|g^{(s-1)}(x)|=1$.
This function is given by
\begin{equation}\label{perfspline:def}
g_{s-1}(x)=\frac{(-1)^{s-1}}{(s-2)!}\int_{-1}^x\sum_{k=0}^{s-1}(-1)^k\chi_{(\cos(\frac{k+1}{s}\pi),\cos(\frac{k}{s}\pi)]}(t)(x-t)^{s-2}\dx t.
\end{equation}

\begin{proposition}\cite[Sec. $6.1$]{Bojanov:1993}\label{perfspline:prop}
We have for $s \in \N$ that the function $g_{s-1}$ given in \eqref{perfspline:def} is a spline of order $s-1$ with support $[-1,1]$ and $\|g_{s-1}\|_1=\frac{1}{(s-1)!2^{s-2}}$. Moreover we have for all $n \leq s-1$ the identity
\begin{equation*}
g_{s-1}^{(n)}(x)=f_{s-1-n}(x),
\end{equation*}
where
\begin{equation*}
f_0(x)=(-1)^{s-1}\operatorname{sign}(U_{s-1}(x)), \quad f_k(x)=\int_{-1}^xf_{k-1}(t)\dx t,\quad k>0,
\end{equation*}
with the explicit representation for $k>0$,
\begin{equation}\label{explicit:fk}
f_k(x)=\frac{(-1)^{s-1}}{(k-1)!}\int_{-1}^x(x-t)^{k-1}\operatorname{sign}(U_{s-1}(t)) \dx t. 
\end{equation}
Here $U_{s-1}$ denotes the Chebychev polynomial of the second kind of order $s-1$.
\end{proposition}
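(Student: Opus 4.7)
The plan is to recognize the inner sum in \eqref{perfspline:def} as $\operatorname{sign}(U_{s-1})$, so that $g_{s-1}$ becomes a repeated antiderivative of a piecewise-constant sign function, and then to read off the three claims from this representation together with standard moment identities for $\operatorname{sign}(U_{s-1})$.

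\textbf{Derivatives and spline structure.} From $U_{s-1}(\cos\theta)=\sin(s\theta)/\sin\theta$ the zeros of $U_{s-1}$ in $(-1,1)$ are precisely $\cos(k\pi/s)$, $k=1,\dots,s-1$, and on each interval $(\cos((k+1)\pi/s),\cos(k\pi/s))$ the polynomial $U_{s-1}$ has sign $(-1)^k$. Hence the inner summand in \eqref{perfspline:def} equals $\operatorname{sign}(U_{s-1}(t))$ a.e., giving
\begin{equation*}
g_{s-1}(x) = \frac{1}{(s-2)!}\int_{-1}^x f_0(t)(x-t)^{s-2}\,\dx t.
\end{equation*}
By Cauchy's formula for repeated integration this right-hand side is the $(s-1)$-fold iterated antiderivative of $f_0$ starting at $-1$. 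Differentiating under the integral $n$ times for $0\leq n\leq s-1$ yields $g_{s-1}^{(n)}(x) = f_{s-1-n}(x)$ together with the explicit integral \eqref{explicit:fk}, and simultaneously shows that $g_{s-1}$ is piecewise polynomial of degree $s-1$ with breakpoints only at the Chebyshev zeros, i.e.\ a spline of order $s-1$.

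\textbf{Support.} Vanishing of $g_{s-1}$ on $(-\infty,-1]$ is immediate. For $x\geq 1$ the integrand is supported in $[-1,1]$, so expanding $(x-t)^{s-2}$ in powers of $t$ reduces the support claim to the moment orthogonality
\begin{equation*}
\int_{-1}^1 \operatorname{sign}(U_{s-1}(t))\, t^j\,\dx t = 0,\qquad j=0,\dots,s-2.
\end{equation*}
Substituting $t=\cos\theta$ and invoking the Fourier expansion $\operatorname{sign}(\sin(s\theta)) = \frac{4}{\pi}\sum_{m\text{ odd}}\sin(ms\theta)/m$ delivers this, since $\cos^j\theta\sin\theta$ is a sine polynomial of frequency at most $j+1\leq s-1<s$ and is therefore orthogonal on $[0,\pi]$ to every $\sin(ms\theta)$, $m\geq 1$.

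\textbf{$L^1$ norm.} Once the support is established, Fubini gives
\begin{equation*}
\int_{-1}^1 g_{s-1}(x)\,\dx x = \frac{(-1)^{s-1}}{(s-1)!}\int_{-1}^1 \operatorname{sign}(U_{s-1}(t))(1-t)^{s-1}\,\dx t.
\end{equation*}
The perfect-spline structure guarantees $g_{s-1}\geq 0$ on $[-1,1]$ (a classical consequence of the alternating-sign pattern of $f_0$; see \cite{Bojanov:1993}), so this integral equals $\|g_{s-1}\|_1$. Substituting $t=\cos\theta$, using $1-\cos\theta=2\sin^2(\theta/2)$ and $\sin\theta=2\sin(\theta/2)\cos(\theta/2)$, and then $\phi=\theta/2$, converts the right-hand side into $2^{s+1}\int_0^{\pi/2}\operatorname{sign}(\sin(2s\phi))\sin^{2s-1}\phi\cos\phi\,\dx\phi$. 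Writing $\sin^{2s-1}\phi\cos\phi=(2s)^{-1}\frac{\mathrm{d}}{\mathrm{d}\phi}\sin^{2s}\phi$ and splitting at the zeros $\phi=k\pi/(2s)$ turns the $\phi$-integral into a telescoping sum of $\sin^{2s}(k\pi/(2s))$ that evaluates to $(-1)^{s-1}s/2^{2s-2}$; collecting constants yields $\|g_{s-1}\|_1 = 1/((s-1)!\,2^{s-2})$. The main obstacle is this last trigonometric evaluation together with the positivity of $g_{s-1}$, both of which are delicate sign-pattern arguments; this is why the authors quote the proposition from \cite{Bojanov:1993} rather than re-derive it.
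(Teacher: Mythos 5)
The paper offers no proof of Proposition \ref{perfspline:prop}; it is quoted directly from \cite{Bojanov:1993}, so there is no in-paper argument to compare with. Your reconstruction is sound in its overall plan: identifying the inner sum in \eqref{perfspline:def} with $\operatorname{sign}(U_{s-1})$ a.e.\ on $(-1,1)$, reading off $g_{s-1}^{(n)}=f_{s-1-n}$ and \eqref{explicit:fk} from Cauchy's repeated-integration formula, and deriving the support claim from the moment orthogonality of $\operatorname{sign}(U_{s-1})$ against polynomials of degree $\le s-2$ via the Fourier series of $\operatorname{sign}(\sin s\theta)$. The Fubini reduction of $\int_{-1}^1 g_{s-1}$ is also correct.

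Two steps, which you flag yourself, are left as assertions, and both are genuinely needed. (i) Positivity $g_{s-1}\ge 0$: this actually follows from what you already have in hand. Since $f_1,\dots,f_{s-1}$ vanish at both $\pm 1$ (at $-1$ by construction, at $+1$ by the moment orthogonality), $g_{s-1}$ has a zero of multiplicity $s-1$ at each endpoint; one interior zero would give $\ge 2s-1$ zeros counted with multiplicity, and repeated Rolle (continuous through order $s-2$, with an intermediate-value argument for the piecewise-constant top derivative) would force $f_0$ to change sign at least $s$ times, contradicting the $s-1$ interior zeros of $U_{s-1}$. Combined with $g_{s-1}>0$ just to the right of $-1$ (because $\operatorname{sign}(U_{s-1})=(-1)^{s-1}$ near $-1$, so $f_0>0$ there), this gives positivity. (ii) The final sum is \emph{not} a telescope in the usual sense: $\sum_{k=0}^{s-1}(-1)^k\bigl(\sin^{2s}\tfrac{(k+1)\pi}{2s}-\sin^{2s}\tfrac{k\pi}{2s}\bigr)$ does not collapse by cancellation, but instead reduces (for $s$ even) to $-1-2\sum_{k=1}^{s-1}(-1)^k\sin^{2s}\tfrac{k\pi}{2s}$, and one still has to establish the nontrivial identity $\sum_{k=1}^{s-1}(-1)^k\sin^{2s}\tfrac{k\pi}{2s}=\tfrac{s}{2^{2s-1}}-\tfrac12$, for instance by expanding $\sin^{2s}\theta$ in multiple-angle cosines and summing geometric series. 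Finally, a small imprecision: when you say ``for $x\ge 1$ the integrand is supported in $[-1,1]$'', that is correct for the characteristic-function form in \eqref{perfspline:def}, but not for $\operatorname{sign}(U_{s-1})$ itself, which equals $\pm 1$ on all of $\R$; the a.e.\ identification you invoke only holds on $(-1,1)$.
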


The scaled function
\begin{equation*}
\tilde{g}_{s-1}(x)=g_{s-1}(2x).
\end{equation*}
has its support in $[-\frac{1}{2},\frac{1}{2}]$ and we have $\tilde{g}_{s-1} \in \mathcal{BV}_0^{s-1}([-\frac{1}{2},\frac{1}{2}])$. We define the kernel by
\begin{equation*}
\tilde{\sigma}_N(t)=\frac{1}{\|\tilde{g}_s\|_{1,N}}\sum_{k=-N}^N\tilde{g}_{s-1}\left(\frac{k}{2(N+1)}\right)\e^{ikt}.
\end{equation*}

\begin{Lemma}\label{var:const}
For $s \in 2\N, s \geq 6$  we have
\begin{align*}
&|\tilde{g}_{s-1}^{(s-1)}|_{V}=2^{s}s,\quad \|\tilde{g}_{s-1}^{(s-1)}\|_{\infty}=2^{s-1}\\
&|\tilde{g}_{s-1}^{(s-2)}|_{V}=2^{s-1},\quad \|\tilde{g}_{s-1}^{(s-2)}\|_{\infty}=2^{s-2}\tan\left(\frac{\pi}{2s}\right)\\
&|\tilde{g}_{s-1}^{(s-3)}|_{V}=2^{s-4}\tan^2\left(\frac{\pi}{2s}\right)s, \quad \|\tilde{g}_{s-1}^{(s-3)}\|_{\infty} = 2^{s-4}\tan^2\left(\frac{\pi}{2s}\right),\\
&|\tilde{g}_{s-1}^{(s-4)}|_{V}\leq2^{s-4}\tan^2\left(\frac{\pi}{2s}\right), \quad \|\tilde{g}_{s-1}^{(s-4)}\|_{\infty}=2^{s-4}\frac{3\sin^2\left(\frac{\pi}{2s}\right)\tan\left(\frac{\pi}{2s}\right)}{2\cos\left(\frac{\pi}{s}\right)-1},\\
&|\tilde{g}_{s-1}^{(s-5)}|_{V}\leq2^{s-4}\frac{3\sin^2\left(\frac{\pi}{2s}\right)\tan\left(\frac{\pi}{2s}\right)}{2\cos\left(\frac{\pi}{s}\right)-1}.
\end{align*}
In the case $s=8$ we have in addition
 \begin{align*}
 \|\tilde{g}_{7}^{(j)}\|_{\infty}\leq \frac{4^j}{2^5(6-j)!},\quad |\tilde{g}_{7}^{(j-1)}|_{V}\leq\frac{4^j}{2^4(6-j)!}, \quad j=1,2,3. 
%
%
\end{align*}
\end{Lemma}

\begin{proof}
First observe that we have for $0\leq n \leq 3$
\begin{equation*}
|\tilde{g}_{s-1}^{(s-1-n)}|_V=2^{(s-1-n)}|g_{s-1}^{(s-1-n)}|_V, \quad \|\tilde{g}_{s-1}^{(s-1-n)}\|_{\infty}=2^{(s-1-n)}\|g_{s-1}^{(s-1-n)}\|_{\infty}.
\end{equation*}
By Proposition \ref{perfspline:prop} we have that $g_{s-1}^{(s-1-n)}=f_n$. For $n=0$ this means that
\begin{equation*}
g_{s-1}^{(s-1)}=f_0(x)=(-1)^{s-1}\operatorname{sign}(U_{s-1}(x))
\end{equation*}
and since $U_{s-1}(x)$ has $s-1$ zeros in the interval $(-1,1)$ and is not equal to zero for $x=1, -1$ we have that
\begin{equation}
|g_{s-1}^{(s-1)}|=|f_0|_V=2s, \quad \|g_{s-1}^{(s-1)}\|_{\infty}=\|f_0\|_{\infty}=1. 
\end{equation}
For $n=1$ The total variation of $f_1$ is given by
\begin{equation}
|f_1|_V=\sup\left(\sum_i|f_1(x_{i+1})-f_1(x_i)|\right)=\sup\left(\sum_i\left|\int_{x_i}^{x_{i+1}}f_0(t)\dx t\right|\right) \leq 2,
\end{equation}
where the supremum is taken over all partitions of $[-1,1]$. Actually choosing as partition the sequence of zeros of $U_s$ shows that $|g_{s-1}^{(s-2)}|_V=2$.
Moreover we have the representation
\begin{align}\label{f1:pwl}
f_1(x)&=\sum_{k=0}^{s-1}(-1)^k\chi_{\left(\cos\left(\frac{(s-k)\pi}{s}\right),\cos\left(\frac{(s-k-1)\pi}{s}\right)\right]}(x)\\
&\quad \cdot \left(x-\cos\left(\frac{(s-k)\pi}{s}\right)-\tan\left(\frac{\pi}{2s}\right)\sin\left(\frac{\pi k}{s}\right)\right),\nonumber
\end{align}
which shows
\begin{equation*}
\|g_{s-1}^{(s-2)}\|_{\infty}=\|f_1\|_{\infty}=|f_1(0)|=\tan\left(\frac{\pi}{2s}\right).
\end{equation*}

Since $f_2$ is continuously differentiable with $f'_2=f_1$ we have 
\begin{equation*}
|f_2|_V=\int_{-1}^1|f'_2(t)| \dx t= \int_{-1}^1|f_1(t)| \dx t.
\end{equation*}
Using the representation \eqref{f1:pwl} of $f_1$ a lengthy calculation shows
\begin{equation*}
|g_{s-1}^{(s-3)}|_{V}=|f_2|_V=\frac{s}{2}\tan^2\left(\frac{\pi}{2s}\right).
\end{equation*}

For $n=2$ we have the representation
 \begin{align}\label{f2:pwq}
f_2(x)&=\sum_{k=0}^{s-1}(-1)^k\chi_{\left(\cos\left(\frac{(s-k)\pi}{s}\right),\cos\left(\frac{(s-k-1)\pi}{s}\right)\right]}(x)\\ \nonumber
&\quad \cdot \frac{1}{2}\Bigg(x^2+2\left(\cos\left(\frac{k\pi}{s}\right)-\tan\left(\frac{\pi}{2s}\right)\sin\left(\frac{\pi k}{s}\right)\right)x\\
&\quad+\frac{1}{2}\left(1+\cos\left(\frac{2\pi k}{s}\right)-\sin\left(\frac{2\pi k}{s}\right)\tan\left(\frac{\pi}{s}\right)\right) \Bigg).\nonumber
\end{align}
Since the local extrema of $f_2$ are the zeros of $f_1$, which are given by
\begin{equation*}
\tan\left(\frac{\pi}{2s}\right)\sin\left(\frac{\pi k}{s}\right)-\cos\left(\frac{k\pi}{s}\right).
\end{equation*}
A lengthy calculation shows that the absolute values at these points are given by
\begin{equation*}
\frac{1}{4}\tan^2\left(\frac{\pi}{2s}\right)\left(1-\frac{\cos\left(\frac{2k+1}{s}\pi\right)}{\cos\left(\frac{\pi}{s}\right)}\right)
\end{equation*}
which becomes maximal for $k=\frac{s}{2}-1$. Since
\begin{equation*}
|f_3|_V=\int_{-1}^1|f'_3(t)| \dx t= \int_{-1}^1|f_2(t)| \dx t,
\end{equation*}
we get immediately $|f_3|_V\leq \tan^2\left(\frac{\pi}{2s}\right)$. In the same way we can derive a piecewise representation of $f_3$
and plug in the zeros of $f_2$. A lengthy calculation for this shows 
\begin{equation*}
\|g_{s-1}^{(s-4)}\|_{\infty}=\|f_3\|_{\infty}=|f(0)|=\frac{1}{24}\left(\tan\left(\frac{3\pi}{2s}\right)-3\tan\left(\frac{\pi}{2s}\right)\right)=\frac{3\sin^2\left(\frac{\pi}{2s}\right)\tan\left(\frac{\pi}{2s}\right)}{2\cos\left(\frac{\pi}{s}\right)-1},
\end{equation*}
and again $|g_{s-1}^{(s-5)}|_{V}\leq 2 \|f_3\|_{\infty}$.

For the case $s=8$ we use the bound
\begin{equation*}
 \|g_{s-1}^{(j)}\|_{\infty} \leq \frac{2^{j+1}}{2^{s-2}(s-j-2)!},
\end{equation*} 
see \cite[Thm. $4.36$]{Schumaker:2007} with a different normalization of the spline and again $|g_{s-1}^{(j-1)}|_V\leq 2 \|g_{s-1}^{(j)}\|_{\infty}$.


\end{proof}

\begin{Theorem}\label{trig:est}
Let $s \geq 6$, $s\in 2 \N$ and $N \geq 2s$. Using the scaled perfect B-spline $\tilde{g}_{s-1}$ as filter function leads to localization estimates for the kernel $\tilde{\sigma}_N$ and it's derivatives up to order $3$, i.e. for $t \in [-\pi,\pi]\setminus\{0\}$ we have 
\begin{equation}
|\tilde{\sigma}_N^{(l)}(t)| \leq \frac{c_{l,s}}{(N+1)^{s-l}|t|^s}, \quad l=0,\dots,3,
\end{equation} 
where the constants are given by
\begin{align*}
c_{0,s}&= 1.02 \cdot (s-1)! \cdot 2^s \cdot s ,\\
c_{1,s}&=1.02\cdot (s-1)! \cdot 2^{s} \cdot 2s,\\
c_{2,s}&=1.02\cdot (s-1)! \cdot2^{s}\cdot (4s+1),\\
c_{3,s}&=1.02\cdot (s-1)!\cdot 2^{s}\cdot\left(9s-2\right).
\end{align*}
\end{Theorem}

\begin{proof}
The kernel and it's derivatives up to order $3$ are given by
\begin{equation*}
\tilde{\sigma}_N^{(l)}(t)=\frac{(2(N+1)i)^l}{\|\tilde{g}_{s-1}\|_{1,N}}\sum_{k=-N}^N\left(\frac{k}{2(N+1)}\right)^l\tilde{g}_{s-1}\left(\frac{k}{2(N+1)}\right)\e^{ikt}, \quad l=0,\dots,3.
\end{equation*}
In view of Proposition \ref{perfspline:prop} and Lemma \ref{var:const} $\|\tilde{g}_{s-1}\|_1=\frac{1}{2}\|g_{s-1}\|_{1}=\frac{1}{(s-1)!2^{s-1}}$ and $|\tilde{g}_{s-1}^{(s-1)}|_{V}=2^{s}s$. Thus by the estimation \eqref{l1norm:est} we can bound the discrete norm $\|\tilde{g}_{s-1}\|_{1,N}$ from below by
\begin{equation}
\|\tilde{g}_{s-1}\|_{1,N}\geq \frac{N+1}{2^{s-2}}\left(\frac{1}{(s-1)!}-\frac{\zeta(s)s}{(\pi s)^s}\right). 
\end{equation}

Combining this with the localization estimate \eqref{trigloc:est} yields
\begin{align*}
|\tilde{\sigma}_N^{(l)}(t)|&\leq \frac{(2(N+1))^l(2^s-1)\zeta(s)}{4^{s-1}(N+1)^{s-1}|t|^s}|(z^l\tilde{g}_{s-1})^{(s-1)}|_V\frac{2^s}{4(N+1)\left(\frac{1}{(s-1)!}-\frac{\zeta(s)s}{(\pi s)^s}\right)},\\
&= \frac{2^l|(z^l\tilde{g}_{s-1})^{(s-1)}|_V}{(N+1)^{s-l}|t|^s}\frac{(2^s-1)\zeta(s)}{2^s\left(\frac{1}{(s-1)!}-\frac{\zeta(s)s}{(\pi s)^s}\right)},\\
&=\frac{2^l|(z^l\tilde{g}_{s-1})^{(s-1)}|_V}{(N+1)^{s-l}|t|^s}\frac{(s-1)!}{\left(\frac{1}{\zeta(s)}-\frac{s(s-1)!}{(\pi s)^s}\right)}.
\end{align*}

Now observe that the sequences $\frac{1}{\zeta(s)}$ and $\frac{-s(s-1)!}{(\pi s)^s}$ are monotonically increasing in $s$. This means we can bound them from below by the first possible value for $s$, that  is $s=6$. This gives the upper bound
\begin{equation*}
\frac{1}{\left(\frac{1}{\zeta(s)}-\frac{s(s-1)!}{(\pi s)^s}\right)} \leq \frac{1}{\left(\frac{945}{\pi^6}-\frac{720}{\pi^6 6^6}\right)} \leq 1.02,
\end{equation*}
which results in
\begin{equation*}
|\tilde{\sigma}_N^{(l)}(t)| \leq \frac{1.02(s-1)!2^l|(z^l\tilde{g}_{s-1})^{(s-1)}|_V}{(N+1)^{s-l}|t|^s}.
\end{equation*}

Using the inequality \eqref{prodvar} for the variation of the derivative of the products together with Lemma \ref{var:const} and the estimation $\tan\left(\frac{\pi}{2s}\right)\leq \frac{2}{s}$ for $s\geq 6$ yields
\begin{align*}
|(z\tilde{g}_{s-1}(z))^{(s-1)}|_V&\leq 2^{s}s ,\\
|(z^2\tilde{g}_{s-1}(z))^{(s-1)}|_V&\leq 2^{s-2}\left(4s+1\right),\\
|(z^3\tilde{g}_{s-1}(z))^{(s-1)}|_V&\leq 2^{s-3}\left(9s-2\right).
\end{align*}
 and therefore the constants.
\end{proof}

In view of the constants appearing in the previous Theorem, the bounds are only meaningful if $\omega$ is well separated from zero. For values of $\omega$ close to zero we will use different bounds derived from series expansion around zero. For this purpose we need upper and lower bounds for the values of the second and fourth derivative of $\tilde{\sigma}_N$ at zero.

\begin{Lemma}\label{2deriv:lowbound}
Let $s \geq 8$, $s\in 2 \N$ and $N \geq 2s$. Using the scaled perfect B-spline $\tilde{g}_{s-1}$ as filter function leads to the following bounds
\begin{equation*}
c_s(N+1)^2 \leq |\tilde{\sigma}_N^{''}(0)| \leq \tilde{c}_s(N+1)^2, 
\end{equation*}
with
\begin{equation*}
 c_s=\frac{0.999}{2(s+1)},\quad \tilde{c}_s=\frac{1.001}{2(s+1)},
\end{equation*}
and
\begin{equation*}
 d_s(N+1)^4\leq |\tilde{\sigma}_N^{(4)}(0)| \leq \tilde{d}_s(N+1)^4,
\end{equation*}
with
\begin{equation*}
d_s=\frac{3\cdot 0.999}{4(s+2)(s+1)}, \quad \tilde{d}_s= \frac{3\cdot1.001}{4(s+2)(s+1)}.
\end{equation*}
In the case $s=8$ we have for $N \geq 20$
\begin{equation}
|\tilde{\sigma}_N^{(6)}(0)|\leq 1.011\cdot \frac{15}{8}\cdot \frac{8!}{11!}\cdot(N+1)^6.
\end{equation}
\end{Lemma}

\begin{proof}
We have for $m \in \N$
\begin{align*}
|\tilde{\sigma}_N^{(2m)}(0)|&=\frac{(2(N+1))^{2m}}{\|\tilde{g}_{s-1}\|_{1,N}}\sum_{k=-N}^N\left(\frac{k}{2(N+1)}\right)^{2m}\tilde{g}_{s-1}\left(\frac{k}{2(N+1)}\right).
\end{align*}
To estimate the expressions
\begin{align*}
\|z^{2m}\tilde{g}_{s-1}(z)\|_{1,N}&=\sum_{k=-N}^N\left(\frac{k}{2(N+1)}\right)^{2m}\tilde{g}_{s-1}\left(\frac{k}{2(N+1)}\right),\\
\end{align*}
using inequality \eqref{l1norm:est} we have to calculate the $L^1$-norm of the functiosn $z^{2m}\tilde{g}_{s-1}$ on $[-\frac{1}{2},\frac{1}{2}]$. By Proposition \ref{perfspline:prop}
we have $g_{s-1}(x)=f_{s-1}(x)$ and integration by parts shows
\begin{align*}
\int_{-\frac{1}{2}}^{\frac{1}{2}}x^{2m}\tilde{g}_{s-1}(x) \dx x&=\frac{1}{2^{2m+1}}\sum_{l=0}^{2m}(-1)^l\frac{(2m)!}{(2m-l)!}f_{s+l}(1).
\end{align*} 
Now we use a specific orthogonality relation for Chebeychev polynomials of the second type, see e.g. \cite{Bojanov:1993}. For each polynomial $p$ of maximal degree $s-2$ we have
\begin{equation*}
\int_{-1}^1p(t)\operatorname{sign}(U_{s-1}(t)) \dx t=0,
\end{equation*}
and for $m \in \N$
\begin{equation*}
\int_{-1}^{1}t^{s+2m}\operatorname{sign}(U_{s-1}(t))\dx t=0
\end{equation*}
since the functions $t^{s+2m}\operatorname{sign}(U_{s-1}(t))$ are always odd.
Moreover we can calculate
\begin{align*}
\int_{-1}^{1}t^{s+2m-1}\operatorname{sign}(U_{s-1}(t))\dx t&=\int_{-1}^1t^{s+2m-1}\sum_{k=0}^{s-1}(-1)^k\chi_{(\cos(\frac{k+1}{s}\pi),\cos(\frac{k+1}{s}\pi)]}(t) \dx t,\\
&=\frac{2}{s+2m}\left(1+\sum_{k=1}^{s-1}(-1)^k\cos^{s+2m}\left(\frac{k}{s}\pi\right)\right).
\end{align*}
Since 
\begin{equation*}
\sum_{k=1}^{s-1}(-1)^k\cos^{s+2}\left(\frac{k}{s}\pi\right)=\frac{s}{2^{s+2m-1}}\binom{s+2m}{m}-1
\end{equation*}
we have
\begin{align*}
\int_{-1}^1t^{s+2m-1}\operatorname{sign}(U_{s-1}(t))\dx t= \frac{s\cdot (s+2m-1)!}{2^{s+2m-2}\cdot m! \cdot(s+m)!}.
\end{align*}
Thus, using the explicit representation \eqref{explicit:fk} of the $f_k$ yields
\begin{align*}
f_{s+l}(1)&=\frac{1}{(s-1)!2^{s-2}}\sum_{r=0}^{\frac{l}{2}}\frac{1}{(l-2r)!}\frac{s!}{4^r\cdot r! \cdot (s+r)!},\quad l \text{ even },\\
f_{s+l}(1)&=\frac{1}{(s-1)!2^{s-2}}\sum_{r=0}^{\frac{l-1}{2}}\frac{1}{(l-2r)!}\frac{s!}{4^r\cdot r! \cdot (s+r)!},\quad l \text{ odd }.
\end{align*}
Using this we can derive
\begin{align*}
\|z^{2m}\tilde{g}_{s-1}(z)\|_{1}&=\frac{(2m)! \cdot s}{4^m\cdot m! \cdot 2^{s+2m-1}(s+m)!}.
\end{align*}

Together we get by applying inequality \eqref{l1norm:est}
\begin{align}\label{qout:est}
|\tilde{\sigma}_N^{''}(0)|&=(2(N+1))^2\frac{\|z^2\tilde{g}_{s-1}(z)\|_{1,N}}{\|\tilde{g}_{s-1}\|_{1,N}},\nonumber\\
&\geq (2(N+1))^2\frac{\left(\|z^2\tilde{g}_{s-1}(z)\|_{1}-\frac{2\zeta(s)}{(4\pi s)^s}|(z^2\tilde{g}_{s-1}(z))^{(s-1)}|_V\right)}{\left(\|\tilde{g}_{s-1}\|_1+\frac{2\zeta(s)}{(4\pi s)^s}|\tilde{g}_{s-1}^{(s-1)}|_V\right)},\nonumber\\
&\geq \frac{(2(N+1))^2}{8(s+1)} \frac{1-2(s+1)(s-1)!\frac{\zeta(s)}{(\pi s)^s}\left(4s+1\right)}{1+s!\frac{\zeta(s)}{(\pi s)^s}}.
\end{align}
We can bound the second quotient in \eqref{qout:est} from below by its value at $s=8$, i.e
\begin{align*}
 \frac{1-2(s+1)(s-1)!\frac{\zeta(s)}{(\pi s)^s}\left(4s+1\right)}{1+s!\frac{\zeta(s)}{(\pi s)^s}} \geq \frac{1-\frac{25}{4\cdot3^8}}{1+\frac{1}{35\cdot 3^8\cdot 2^5}}\geq 0.999.
\end{align*}
Using the same argument we can bound from above
\begin{align*}
|\tilde{\sigma}_N^{''}(0)|&\leq\frac{(2(N+1))^2}{8(s+1)} \frac{1+2(s+1)(s-1)!\frac{\zeta(s)}{(\pi s)^s}\left(4s+1\right)}{1-s!\frac{\zeta(s)}{(\pi s)^s}},\\
&\leq \frac{1.001}{2(s+1)}(N+1)^2.
\end{align*}
By Lemma \ref{var:const} and the inequality \eqref{prodvar} as well as $\sin\left(\frac{\pi}{2s}\right)\leq \frac{\pi}{2s}$ and $\cos\left(\frac{\pi}{2s}\right)\left(2\cos\left(\frac{\pi}{s}\right)-1\right) \geq \frac{3}{4}$ we have
\begin{equation*}
|(z^4\tilde{g}_{s-1}(z))^{(s-1)}|_V\leq 2^{s-4}(35s-19)
\end{equation*}
and therefore with the same argumentation as before
\begin{align*}
|\tilde{\sigma}_N^{(4)}(0)| &\leq \frac{3(N+1)^4}{4(s+2)(s+1)}\frac{1+\frac{4}{3}\frac{\zeta(s)(s+2)!}{(\pi s)^ss}(35s-19)}{1-\frac{\zeta(s)s!}{(\pi s)^s}},\\
&\leq \frac{3(N+1)^4}{4(s+2)(s+1)} 1.001,
\end{align*}
and
\begin{equation*}
|\tilde{\sigma}_N^{(4)}(0)| \geq \frac{3(N+1)^4}{4(s+2)(s+1)} 0.999.
\end{equation*}

In the case $s=8$ we have again by using Lemma \ref{var:const} and the inequality \eqref{prodvar}
\begin{equation*}
|(z^6\tilde{g}_{7}(z))^{(7)}|_V\leq 5.0896 \cdot 10^{3}.
\end{equation*}
Using the same argument as before shows
\begin{equation*}
|\tilde{\sigma}_N^{(6)}(0)|\leq 1.011\cdot \frac{15}{8}\cdot \frac{s!}{(s+3)!}\cdot(N+1)^6=1.011\cdot \frac{15}{8}\cdot \frac{8!}{11!}\cdot(N+1)^6.
\end{equation*}   
\end{proof}




\section{Proofs}\label{app:proofs}

\subsection{Proof of Lemma \ref{local:kernelall2}}
\begin{proof}
The proof works in the same way as the proof of Theorem \ref{local:kernelall}. First the derivatives are calculated directly via the product rule, then the according terms are grouped together in the right way, and an estimation is shown.
For abbreviation we suppress the dependence on $x,y$ in the following. By the product rule we have
\begin{equation}
X_i^xX_i^xX_k^y\sigma_N=-X_i^xX_i^x e_k\tilde{\sigma}_N^{'}-(2e_iX_i^xe_k+e_kX_i^xe_i)\tilde{\sigma}_N^{''}-e_i^2e_k\tilde{\sigma}_N^{'''}.
\end{equation}

Suppose we have
\begin{equation*}
X_i^xe_k=-e_ie_k\left(\frac{1+\cos(\omega)}{2\sin(\omega)}\right)\pm \frac{e_j}{2},
\end{equation*}
see \eqref{deriv:rotax}, then the factor in front of $\tilde{\sigma}_N^{'}$ is calculated as
\begin{align*}
X_i^xX_i^xe_k&=\left(\frac{1+\cos(\omega)}{2\sin(\omega)}\right)^2\left(e_i^2e_k-(1-e_i^2)e_k\right)+\left(\frac{1+\cos(\omega)}{2\sin(\omega)}\right)\left(\frac{e_i^2e_k}{\sin(\omega)}\mp  e_ie_j\right)\\
&\quad \mp\frac{e_k}{4},
\end{align*}
and the factor in front of $\tilde{\sigma}_N^{''}$ as
\begin{align*}
(2e_iX_i^xe_k+e_kX_i^xe_i)&=\left(\frac{1+\cos(\omega)}{2\sin(\omega)}\right)\left(-2e_i^2e_k+e_k(1-e_i^2)\right)\pm e_ie_j.
\end{align*}

This means
\begin{align}\label{thirdderiv:1}
X_i^xX_i^xX_k^y\sigma_N&=\left((3e_ke_i^2-e_k)\left(\frac{1+\cos(\omega)}{2\sin(\omega)}\right)\mp e_ie_j\right)\tilde{\sigma}_N^{''}(\omega)\nonumber \\
&\quad + \Bigg((2e_ke_i^2-e_k)\left(\frac{1+\cos(\omega)}{2\sin(\omega)}\right)^2-e_ke_i^2\left(\frac{1+\cos(\omega)}{2\sin^2(\omega)}\right) \nonumber \\
&\quad \pm e_ie_j\left(\frac{1+\cos(\omega)}{2\sin(\omega)}\right)\pm \frac{e_k}{4}\Bigg)\tilde{\sigma}_N^{'}(\omega)\nonumber \\
&\quad -e_i^2e_k\tilde{\sigma}_N^{'''}.
\end{align}
Therefore, by again using \eqref{sin:lowest}, we have for $\omega \in (\frac{\pi}{2(N+1)},\frac{\pi}{2}]$
\begin{align*}
|X_i^xX_i^xX_k^y\sigma_N|&\leq\left(|3e_ke_i^2-e_k|(N+1)+|e_ie_j|\right)|\tilde{\sigma}_N^{''}|+|e_i^2e_k||\tilde{\sigma}_N^{'''}|\\
&\quad +\Bigg(\left|e_ke_i^2\cos(\omega)-e_k\left(\frac{1+\cos(\omega)}{2}\right)\right|(N+1)^2\\
&\quad+|e_ie_j|(N+1)+\frac{|e_k|}{4}\Bigg)|\tilde{\sigma}_N^{'}|.
\end{align*}
Now we use that $|3e_ke_i^2-e_k|\leq 1$, $|e_ke_i^2|, |e_ie_k^2| \leq \frac{2}{3\sqrt{3}}$, $|e_ie_j|\leq \frac{1}{2}$, and
\begin{equation*}
\left|e_ke_i^2\cos(\omega)-e_k\left(\frac{1+\cos(\omega)}{2}\right)\right| \leq 1,
\end{equation*}
to derive for $N\geq 2s \geq 12$ 
\begin{align*}
|X_i^xX_i^xX_k^y\sigma_N|
&\leq (N+1) 1.04 |\tilde{\sigma}_N^{''}|+\frac{2}{3\sqrt{3}}|\tilde{\sigma}_N^{'''}|+(N+1)^21.04|\tilde{\sigma}_N^{'}|.
\end{align*}
With Theorem \ref{trig:est} and the observation that
\begin{align*}
\frac{c_{2,s}}{c_{3,s}}&=\frac{4s+1}{9s-2}\leq \frac{1}{2},\\
\frac{c_{1,s}}{c_{3,s}}&=\frac{2s}{9s-2}\leq \frac{1}{4},
\end{align*}
we have
\begin{equation*}
|X_i^xX_i^xX_k^y\sigma_N(x,y)| \leq \frac{1.2\cdot c_{3,s}}{(N+1)^{s-3}\omega(y^{-1}x)^s}.
\end{equation*}
In the case $k=i$, we calculate
\begin{align}\label{thirdderiv:2}
X_i^xX_i^xX_i^y\sigma_N&=-3e_i(1-e_i^2) \left(\frac{1+\cos(\omega)}{2\sin(\omega)}\right)\tilde{\sigma}_N^{''}(\omega)+2e_i(1-e_i^2) \left(\frac{1+\cos(\omega)}{2\sin(\omega)}\right)^2\tilde{\sigma}_N^{'}(\omega)\nonumber \\
&\quad+e_i(1-e_i^2)\left(\frac{1+\cos(\omega)}{2\sin^2(\omega)}\right)\tilde{\sigma}_N^{'}(\omega)-e_i^3\tilde{\sigma}_N^{'''}(\omega),
\end{align}
which yields
\begin{align*}
|X_i^xX_i^xX_i^y\sigma_N| &\leq 3|e_i(1-e_i^2)|(N+1)\left(|\tilde{\sigma}_N^{''}|+(N+1)^2|\tilde{\sigma}_N^{'}|\right)+|e_i^3||\tilde{\sigma}_N^{'''}|,\\
&\leq \frac{1}{(N+1)^{s-3}\omega^s}|e_i|\left(3(1-e_i^2)\left(c_{2,s}+c_{1,s}\right)+e_i^2c_{3,s}\right),\\
&\leq \frac{c_{3,s}}{(N+1)^{s-3}\omega^s}\underbrace{|e_i|(2.25-1.25e_i^2)}_{\leq 1.2} \leq \frac{1.2 \cdot c_{3,s}}{(N+1)^{s-3}\omega^s}.
\end{align*}
This shows the estimate for the on-diagonal entries of the Hessian. 
For the off-diagonal entries observe, that we have for $n \neq j,i$ the following sign combination
\begin{align*}
X_i^xe_n&=-e_ie_n\left(\frac{1+\cos(\omega)}{2\sin(\omega)}\right)\pm \frac{e_j}{2},\\
X_j^xe_n&=-e_je_n\left(\frac{1+\cos(\omega)}{2\sin(\omega)}\right)\mp \frac{e_i}{2},\\
X_j^xe_i&=-e_je_i\left(\frac{1+\cos(\omega)}{2\sin(\omega)}\right)\pm \frac{e_n}{2}.
\end{align*}
This gives
\begin{align*}
 X_i^xX_n^x\sigma_N \mp \frac{1}{2}X_j^x\sigma_N=e_ie_n\left(\tilde{\sigma}_N^{''}(\omega)-\frac{1+\cos(\omega)}{2\sin(\omega)}\tilde{\sigma}_N^{'}(\omega)\right)
\end{align*} 
and therefore, using \eqref{sin:lowest} and Theorem \ref{trig:est},
\begin{align*}
|X_i^xX_n^x\sigma_N \mp \frac{1}{2}X_j^x\sigma_N|&\leq |e_ie_n|\left(|\tilde{\sigma}_N^{''}(\omega)|+(N+1)|\tilde{\sigma}_N^{'}(\omega)|\right),\\
&\leq \frac{|e_ie_n|}{(N+1)^{s-2}\omega^s}\left(c_{2,s}+c_{1,s}\right).
\end{align*}
Since $\frac{c_{1,s}}{c_{2,s}} \leq \frac{1}{2}$ and $|e_ie_n| \leq \frac{1}{2}$,
 we have
\begin{equation*}
|X_i^xX_n^x\sigma_N\mp \frac{1}{2}X_j^x\sigma_N|\leq  \frac{c_{2,s}}{(N+1)^{s-2}\omega^s},
\end{equation*}
which shows the first inequality. For the second one we calculate
\begin{align*}
X_j^xX_i^xX_n^y\sigma_N \mp \frac{1}{2} X_n^xX_n^y\sigma_N&=-X_j^xX_i^xe_n\tilde{\sigma}_N'(\omega)-\left(e_jX_i^xe_n+e_nX_j^xe_i+e_iX_j^xe_n\right)\tilde{\sigma}_N^{''}(\omega)\\
&\quad -e_ie_je_n\tilde{\sigma}_N^{'''}(\omega)\mp \frac{1}{2}\left(-X_n^xe_n\tilde{\sigma}_N'(\omega)-e_n^2\tilde{\sigma}_N^{''}(\omega)\right),\\
&=-X_j^xX_i^xe_n\tilde{\sigma}_N'(\omega)-\left(e_jX_i^xe_n+e_nX_j^xe_i+e_iX_j^xe_n\right)\tilde{\sigma}_N^{''}(\omega)\\
&\quad-e_ie_je_n\tilde{\sigma}_N^{'''}(\omega)\pm \frac{1}{2}\left((1-e_n^2)\left(\frac{1+\cos(\omega)}{2\sin(\omega)}\right)\tilde{\sigma}_N'(\omega)+e_n^2\tilde{\sigma}_N^{''}(\omega)\right),
\end{align*}
with
\begin{align*}
X_j^xX_i^xe_n&=2e_je_ie_n\left(\frac{1+\cos(\omega)}{2\sin(\omega)}\right)^2\mp \frac{e_n^2+e_j^2-e_i^2}{2}\left(\frac{1+\cos(\omega)}{2\sin(\omega)}\right)\\
&\quad +\frac{e_ie_je_n}{\sin(\omega)}\left(\frac{1+\cos(\omega)}{2\sin(\omega)}\right)\pm\frac{1}{2}\left(\frac{1+\cos(\omega)}{2\sin(\omega)}\right)
\end{align*}
and
\begin{align*}
\left(e_jX_i^xe_n+e_nX_j^xe_i+e_iX_j^xe_n\right)&=(-3e_ie_ne_j)\left(\frac{1+\cos(\omega)}{2\sin(\omega)}\right)\pm\frac{e_j^2+e_n^2-e_i^2}{2}.
\end{align*}
Putting this together yields 
\begin{align}\label{thirdderiv:3}
X_j^xX_i^xX_n^y\sigma_N \mp \frac{1}{2} X_n^xX_n^y\sigma_N&=\left(3e_ie_je_n\left(\frac{1+\cos(\omega)}{2\sin(\omega)}\right)\pm \frac{e_i^2-e_j^2}{2}\right)\tilde{\sigma}_N^{''}(\omega) \nonumber \\
&\quad -\Bigg(2e_ie_je_n\left(\frac{1+\cos(\omega)}{2\sin(\omega)}\right)^2+e_ie_je_n\left(\frac{1+\cos(\omega)}{2\sin^2(\omega)}\right) \nonumber \\
&\quad \pm\frac{e_i^2-e_j^2}{2}\left(\frac{1+\cos(\omega)}{2\sin(\omega)}\right)\Bigg)\tilde{\sigma}_N^{'}(\omega)- e_ie_ne_j\tilde{\sigma}_N^{'''}(\omega).
\end{align}
Again using \eqref{sin:lowest}, we can estimate
\begin{align*}
|X_j^xX_i^xX_n^y\sigma_N\mp \frac{1}{2} X_n^xX_n^y\sigma_N| &\leq \left(3|e_ie_je_n|(N+1)+ \frac{|e_i^2-e_j^2|}{2}\right)|\tilde{\sigma}_N^{''}(\omega)| \nonumber \\
&\quad +\left(3|e_ie_je_n|(N+1)^2+\frac{|e_i^2-e_j^2|}{2}(N+1)\right)|\tilde{\sigma}_N^{'}(\omega)|\\
&\quad + |e_ie_ne_j||\tilde{\sigma}_N^{'''}(\omega)|.
\end{align*}
With Theorem \ref{trig:est} and $|e_ie_ne_j| \leq \left(\frac{1}{\sqrt{3}}\right)^3 \leq \frac{1}{5}$, as well as $\frac{c_{2,s}}{c_{3,s}}\leq \frac{1}{2}$ and $\frac{c_{1,s}}{c_{3,s}}\leq \frac{1}{4}$, we have for $N\geq 2s \geq 12$
\begin{align*}
|X_j^xX_i^xX_n^y\sigma_N\mp \frac{1}{2} X_n^xX_n^y\sigma_N| 
&\leq  \frac{c_{3,s}}{(N+1)^{s-3}\omega^s}.
\end{align*}
In the same way one calculates 
\begin{align}\label{thirdderiv:4}
X_j^xX_i^xX_i^y\sigma_N \mp \frac{1}{2} X_n^xX_i^y\sigma_N&=\left(e_j(3e_i^2-1)\left(\frac{1+\cos(\omega)}{2\sin(\omega)}\right)\mp\frac{e_ie_j}{2} \right)\tilde{\sigma}_N^{''}(\omega) \nonumber \\
&\quad -\Bigg(2e_i^2e_j\left(\frac{1+\cos(\omega)}{2\sin(\omega)}\right)^2+e_j(1-e_i^2)\left(\frac{1+\cos(\omega)}{2\sin^2(\omega)}\right) \nonumber \\
&\quad \mp \frac{e_ie_n}{2}\left(\frac{1+\cos(\omega)}{2\sin(\omega)}\right)\pm\frac{e_j}{4}\Bigg)\tilde{\sigma}_N^{'}(\omega)- e_i^2e_j\tilde{\sigma}_N^{'''}(\omega),
\end{align}
leading to the estimation
\begin{align*}
|X_j^xX_i^xX_i^y\sigma_N \mp \frac{1}{2} X_n^xX_i^y\sigma_N|&\leq\left(|e_j(3e_i^2-1)|(N+1)+\frac{|e_ie_j|}{2} \right)|\tilde{\sigma}_N^{''}(\omega)| \nonumber \\
&\quad +\Bigg(|e_j(e_i^2\cos(\omega)+1)|(N+1)^2 \nonumber \\
&\quad + \frac{|e_ie_n|}{2}(N+1)+\frac{|e_j|}{4}\Bigg)|\tilde{\sigma}_N^{'}(\omega)|+ |e_i^2e_j||\tilde{\sigma}_N^{'''}(\omega)|.
\end{align*}

We have $|e_j(3e_i^2-1)|\leq 1$, $|e_j(e_i^2\cos(\omega)+1)|\leq 1.1$,  $|e_ie_j|\leq \frac{1}{2}$, $|e_i^2e_j| \leq \frac{2}{3\sqrt{3}}$, and thus for $N\geq 2s \geq 12$
\begin{align*}
|X_j^xX_i^xX_i^y\sigma_N \mp \frac{1}{2} X_n^xX_i^y\sigma_N|
&\leq \frac{1.2 \cdot c_{3,s}}{(N+1)^{s-3}\omega^s}.
\end{align*}
For the last inequality one finds
\begin{align}\label{thirdderiv:5}
X_j^xX_i^xX_j^y\sigma_N \mp \frac{1}{2} X_n^xX_j^y\sigma_N&=\left(e_i(3e_j^2-1)\left(\frac{1+\cos(\omega)}{2\sin(\omega)}\right)\mp\frac{e_ne_j}{2} \right)\tilde{\sigma}_N^{''}(\omega) \nonumber \\
&\quad -\Bigg((2e_ie_j^2-e_i)\left(\frac{1+\cos(\omega)}{2\sin(\omega)}\right)^2+e_ie_j^2\left(\frac{1+\cos(\omega)}{2\sin^2(\omega)}\right) \nonumber \\
&\quad + \frac{e_je_n}{2}\left(\frac{1+\cos(\omega)}{2\sin(\omega)}\right)\Bigg)\tilde{\sigma}_N^{'}(\omega)- e_ie_j^2\tilde{\sigma}_N^{'''}(\omega).
\end{align}
Using similar estimations as before, one shows
\begin{equation*}
|X_j^xX_i^xX_j^y\sigma_N \mp \frac{1}{2} X_n^xX_j^y\sigma_N|\leq \frac{1.2 \cdot c_{3,s}}{(N+1)^{s-3}\omega^s}.
\end{equation*} 
\end{proof}

\subsection{Proof of Lemma \ref{lip:est}}
\begin{proof}
Since
\begin{equation*}
X_i^xX_i^x\sigma_N=\frac{1+\cos(\omega)}{2\sin(\omega)}(1-e_i^2)\tilde{\sigma}_N^{'}(\omega)+e_i^2\tilde{\sigma}_N^{''}(\omega),
\end{equation*}
where $\omega=\omega(y^{-1}x)$ again denotes the rotation angle and $e_i=e_i(y^{-1}x)$ denotes the $i$-th component of the rotation axis. We can write 
\begin{align}\label{start:lipest}
\left(X_i^xX_i^x\sigma_N-\tilde{\sigma}_N^{''}(0)\right)&=(1-e_i^2)\left(\frac{1+\cos(\omega)}{2\sin(\omega)}\tilde{\sigma}_N^{'}(\omega)-\tilde{\sigma}_N^{''}(\omega)\right)\\
&\quad+ \left(\tilde{\sigma}_N^{''}(\omega)-\tilde{\sigma}_N^{''}(0)\right).\nonumber
\end{align}
The second term can be estimated using 
\begin{equation}
(1-\cos(k\omega))\leq \frac{k^2\omega^2}{2},  \quad \omega \in [0, \frac{\delta}{N+1}].
\end{equation}
Therefore, we can estimate using Lemma \ref{2deriv:lowbound}
\begin{align*}
\left(\tilde{\sigma}_N^{''}(\omega)-\tilde{\sigma}_N^{''}(0)\right)&=\frac{1}{\|\tilde{g}_{s-1}\|_{1,N}}2\sum_{k=1}^N\tilde{g}_s\left(\frac{k}{2(N+1)}\right)k^2(1-\cos(k\omega)),\\
&\leq \frac{\omega^2}{2}|\tilde{\sigma}_N^{(4)}(0)| \leq \frac{\tilde{d}_s}{2}(N+1)^2\delta^2.
\end{align*}

We show that the first term in \eqref{start:lipest} is less or equal to zero and bounded in absolute value by the second term. Since
\begin{equation*}
\left(\frac{1+\cos(\omega)}{2\sin(\omega)}\tilde{\sigma}_N^{'}(\omega)-\tilde{\sigma}_N^{''}(\omega)\right)=\frac{2}{\|\tilde{g}_{s-1}\|_{1,N}}\sum_{k=1}^N\tilde{g}_s\left(\frac{k}{2(N+1)}\right)k^2\left(\cos(k\omega)-\frac{1+\cos(\omega)}{2k\sin(\omega)}\sin(k\omega)\right),
\end{equation*}
it is sufficient to show that for each $1\leq k \leq N$
\begin{equation}\label{lessthanzero:start}
\left(\cos(k\omega)-\frac{1+\cos(\omega)}{2k\sin(\omega)}\sin(k\omega)\right)\leq 0, \quad \omega \in [0, \frac{\delta}{N+1}].
\end{equation}
First observe that the lefthand side in \eqref{lessthanzero:start} equals zero at $\omega=0$. Now we show that the lefthand side is also monotonically decreasing. The derivative is given by
\begin{equation}\label{lessthanzero:deriv}
-k\sin(k\omega)+\frac{1}{2}\left(\frac{1+\cos(\omega)}{\sin(\omega)}\right)\left(\frac{\sin(k\omega)}{k\sin(\omega)}-\cos(k\omega)\right).
\end{equation}

To proceed, we show that for each $1\leq k\leq N$
\begin{equation}\label{est4}
\left(\frac{1+\cos(\omega)}{\sin(\omega)}\right)\left(\frac{\sin(k\omega)}{k\sin(\omega)}-\cos(k\omega)\right) \leq k\sin(k\omega).
\end{equation}
On the interval $[0,\frac{\delta}{N+1}]$ this is equivalent to
\begin{equation}\label{est3}
k\cos(\omega) - \frac{\cos(k\omega)\sin(\omega)}{\sin(k\omega)}\leq k - \frac{1}{k}.
\end{equation}
The function on the left hand side equals $k -\frac{1}{k}$ for $\omega=0$. To get the desired estimate we show that the function on the lefthand side of \eqref{est3} attains its maximum on the interval $[0, \frac{\delta}{N+1}]$ at $\omega=0$. The derivative of the left hand side of \eqref{est3} is given by
\begin{equation*}
-k\sin(\omega)+\frac{k\sin(\omega)}{\sin^2(k\omega)}-\frac{\cos(k\omega)\cos(\omega)}{\sin(k\omega)}=k \cot(k\omega)\sin(\omega)\left(\frac{\cos(k\omega)}{\sin(k\omega)}-\frac{\cos(\omega)}{k\sin(\omega)}\right).
\end{equation*}
We have $\frac{k\sin(\omega)}{\cos(\omega)}< \frac{\sin(k \omega)}{\cos(k\omega)}$, due to the power series representation of the tangent function, and therefore
\begin{equation*}
\left(\frac{\cos(k\omega)}{\sin(k\omega)}-\frac{\cos(\omega)}{k\sin(\omega)}\right) < 0.
\end{equation*}
This means the function given by the left hand side of \eqref{est3} is strictly monotonic decreasing on the interval $[0, \frac{\delta}{N+1}]$. Thus it attains its maximum at $\omega=0$. Therefore the function in \eqref{lessthanzero:deriv} is strictly negative, which implies that the inequality \eqref{lessthanzero:start} holds.

The first term of \eqref{start:lipest} can be bounded in absolute value by
\begin{equation*}
\left|\frac{1+\cos(\omega)}{2\sin(\omega)}\tilde{\sigma}_N^{'}(\omega)-\tilde{\sigma}_N^{''}(\omega)\right|=\frac{2}{\|\tilde{g}_{s-1}\|_{1,N}}\sum_{k=1}^N\tilde{g}_s\left(\frac{k}{2(N+1)}\right)k^2\left|\cos(k\omega)-\frac{1+\cos(\omega)}{2k\sin(\omega)}\sin(k\omega)\right|.
\end{equation*}
As seen before in \eqref{lessthanzero:start} we already know that
\begin{align*}
\left|\cos(k\omega)-\frac{1+\cos(\omega)}{2k\sin(\omega)}\sin(k\omega)\right|&=\left(\frac{1+\cos(\omega)}{2k\sin(\omega)}\sin(k\omega)-\cos(k\omega)\right),\\
&=\frac{1+\cos(\omega)}{2k\sin(\omega)}\sin(k\omega)-\cos(k\omega).
\end{align*}
Since $\sin(k\omega)\leq k\sin(\omega)$, we see
\begin{equation*}
\left|\cos(k\omega)-\frac{1+\cos(\omega)}{2k\sin(\omega)}\sin(k\omega)\right|\leq 1-\cos(k\omega)\leq \frac{k^2\omega^2}{2},\quad \omega \in [0, \frac{\delta}{N+1}], 
\end{equation*}
which shows
\begin{equation*}
\left|X_i^xX_i^x\sigma_N-\tilde{\sigma}_N^{''}(0)\right|\leq \frac{\tilde{d}_s}{2}(N+1)^2\delta^2.
\end{equation*}
Moreover,
\begin{equation}
\left|\frac{1+\cos(\omega)}{2\sin(\omega)}\tilde{\sigma}_N^{'}(\omega)-\tilde{\sigma}_N^{''}(\omega)\right|\leq \frac{\tilde{d}_s}{2}(N+1)^2\delta^2, \quad \omega \in  [0, \frac{\delta}{N+1}].
\end{equation}
Similarly we have
\begin{align*}
 X_i^xX_n^x\sigma_N\mp \frac{1}{2}X_j^x\sigma_N=e_ie_n\left(\tilde{\sigma}_N^{''}(\omega)-\frac{1+\cos(\omega)}{2\sin(\omega)}\tilde{\sigma}_N^{'}(\omega)\right),
\end{align*}
which yields, since $|e_ie_j| \leq \frac{1}{2}$,
\begin{equation*}
|X_i^xX_n^x\sigma_N\mp X_j^x\sigma_N| \leq \frac{\tilde{d}_s}{4}(N+1)^2\delta^2.
\end{equation*}
For the third mixed derivatives one has in the case $n\neq i$
\begin{align*}
X_i^xX_i^xX_n^y\sigma_N&=(2e_ne_i^2-e_n)\left(\frac{1+\cos(\omega)}{2\sin(\omega)}\right)\left(\tilde{\sigma}_N^{''}(\omega)-\left(\frac{1+\cos(\omega)}{2\sin(\omega)}\right)\tilde{\sigma}_N(\omega)\right)\\
&\quad + e_ne_i^2\left(\frac{1+\cos(\omega)}{2\sin(\omega)}\right)\left(\tilde{\sigma}_N^{''}(\omega)-\frac{\tilde{\sigma}_N(\omega)}{\sin(\omega)}\right)\\
&\quad \pm e_ie_j \left(\left(\frac{1+\cos(\omega)}{2\sin(\omega)}\right)\tilde{\sigma}_N(\omega)-\tilde{\sigma}_N^{''}(\omega)\right)-e_i^2e_n\tilde{\sigma}_N^{'''}\pm \frac{e_n}{4}\tilde{\sigma}_N'(\omega),
\end{align*}
as seen already in \eqref{thirdderiv:1}. Since $\frac{1+\cos(\omega)}{2}\leq 1$, we have
\begin{align}\label{sin:est}
\left(\frac{1+\cos(\omega)}{\sin(\omega)}\right)\left(\left(\frac{1+\cos(\omega)}{2\sin(\omega)}\right)\frac{\sin(k\omega)}{k}-\cos(k\omega)\right)&\leq\left(\frac{1+\cos(\omega)}{\sin(\omega)}\right)\left(\frac{\sin(k\omega)}{k\sin(\omega)}-\cos(k\omega)\right),\\
& \leq k\sin(k\omega),\nonumber
\end{align}
see \eqref{est4}. Therefore, since $|\frac{e_n}{2}|(|2e_i^2-1|+3e_i^2) \leq 1$,
\begin{align*}
|X_i^xX_i^xX_n^y\sigma_N|&\leq \tilde{d}_s\left((N+1)^3\delta+\frac{1}{4}(N+1)^2\delta^2\right)+\frac{\tilde{c}_s}{4}(N+1)\delta.
\end{align*}
In the case $n=i$ we have, see \eqref{thirdderiv:2},
\begin{align*}
X_i^xX_i^xX_i^y\sigma_N&=2e_i(1-e_i^2) \left(\frac{1+\cos(\omega)}{2\sin(\omega)}\right)\left(\left(\frac{1+\cos(\omega)}{2\sin(\omega)}\right)\tilde{\sigma}_N(\omega)-\tilde{\sigma}_N^{''}(\omega)\right)\\
&\quad+e_i(1-e_i^2)\left(\frac{1+\cos(\omega)}{2\sin(\omega)}\right)\left(\frac{\tilde{\sigma}_N(\omega)}{\sin(\omega)}-\tilde{\sigma}_N^{''}(\omega)\right)-e_i^3\tilde{\sigma}_N^{'''}(\omega).
\end{align*}
Using $\frac{3}{2}|e_i|(1-e_i^2)+|e_i^3|\leq 1$, this results in
\begin{equation*}
|X_i^xX_i^xX_i^y\sigma_N|\leq\tilde{d}_s(N+1)^3\delta.
\end{equation*}

Observe that we have for $n \neq j,i$ the following sign combination
\begin{align*}
X_i^xe_n&=-e_ie_n\left(\frac{1+\cos(\omega)}{2\sin(\omega)}\right)\pm \frac{e_j}{2},\\
X_j^xe_n&=-e_je_n\left(\frac{1+\cos(\omega)}{2\sin(\omega)}\right)\mp \frac{e_i}{2},\\
X_j^xe_i&=-e_je_i\left(\frac{1+\cos(\omega)}{2\sin(\omega)}\right)\pm \frac{e_n}{2}.
\end{align*}
and therefore, as seen in \eqref{thirdderiv:3}, 
\begin{align*}
X_j^xX_i^xX_n^y\sigma_N \mp \frac{1}{2} X_n^xX_n^y\sigma_N&=2e_ie_je_n\left(\frac{1+\cos(\omega)}{2\sin(\omega)}\right)\left(\tilde{\sigma}_N^{''}(\omega)-\left(\frac{1+\cos(\omega)}{2\sin(\omega)}\right)\tilde{\sigma}_N'(\omega)\right)\\
&\quad +e_ie_je_n\left(\frac{1+\cos(\omega)}{2\sin(\omega)}\right)\left(\tilde{\sigma}_N^{''}(\omega)-\frac{\tilde{\sigma}_N'(\omega)}{\sin(\omega)}\right)-e_ie_ne_j\tilde{\sigma}_N^{'''}(\omega)\\
&\quad \pm \frac{e_i^2-e_j^2}{2}\left(\tilde{\sigma}_N^{''}(\omega)-\left(\frac{1+\cos(\omega)}{2\sin(\omega)}\right)\tilde{\sigma}_N'(\omega)\right),
\end{align*}
so we can estimate using \eqref{sin:est} together with $|e_ie_ne_j|\leq \left(\frac{1}{\sqrt{3}}\right)^3\leq \frac{1}{5}$
\begin{equation*}
\left|X_j^xX_i^xX_n^y\sigma_N\mp \frac{1}{2} X_n^xX_n^y\sigma_N\right|\leq\tilde{d}_s\left(\frac{1}{2}(N+1)^3\delta+\frac{1}{4}(N+1)^2\delta^2\right).
\end{equation*}
Similarly, we have, see \eqref{thirdderiv:4} and \eqref{thirdderiv:5},
\begin{align*}
X_j^xX_i^xX_i^y\sigma_N\mp \frac{1}{2} X_n^xX_i^y\sigma_N&=2e_i^2e_j\left(\frac{1+\cos(\omega)}{2\sin(\omega)}\right)\left(\tilde{\sigma}_N^{''}(\omega)-\left(\frac{1+\cos(\omega)}{2\sin(\omega)}\right)\tilde{\sigma}_N'(\omega)\right)\\
& \quad -e_j(1-e_i^2)\left(\frac{1+\cos(\omega)}{2\sin(\omega)}\right)\left(\tilde{\sigma}_N^{''}(\omega)-\frac{\tilde{\sigma}_N'(\omega)}{\sin(\omega)}\right)- e_i^2e_j \tilde{\sigma}_N^{'''}(\omega)\\
&\quad \mp \frac{e_ie_n}{2}\left(\tilde{\sigma}_N^{''}(\omega)-\left(\frac{1+\cos(\omega)}{2\sin(\omega)}\right)\tilde{\sigma}_N'(\omega)\right)\mp\frac{\e_j}{4}\tilde{\sigma}_N^{'}(\omega)
\end{align*}
and
\begin{align*}
X_j^xX_i^xX_j^y\sigma_N \mp \frac{1}{2} X_n^xX_j^y\sigma_N&=
(2e_ie_j^2-e_i)\left(\frac{1+\cos(\omega)}{2\sin(\omega)}\right)\left(\tilde{\sigma}_N^{''}(\omega)-\left(\frac{1+\cos(\omega)}{2\sin(\omega)}\right)\tilde{\sigma}_N(\omega)\right)\\
&\quad + e_ie_j^2\left(\frac{1+\cos(\omega)}{2\sin(\omega)}\right)\left(\tilde{\sigma}_N^{''}(\omega)-\frac{\tilde{\sigma}_N(\omega)}{\sin(\omega)}\right)- e_ie_j^2 \tilde{\sigma}_N^{'''}(\omega)\\
&\quad \pm \frac{e_ne_j}{2}\left(\tilde{\sigma}_N^{''}(\omega)-\left(\frac{1+\cos(\omega)}{2\sin(\omega)}\right)\tilde{\sigma}_N(\omega)\right),
\end{align*}
which yields
\begin{align*}
\left|X_j^xX_i^xX_i^y\sigma_N \mp \frac{1}{2} X_n^xX_i^y\sigma_N\right|&\leq \tilde{d}_s\left((N+1)^3\delta+\frac{1}{8}(N+1)^2\delta^2\right)+\frac{c_s}{4}(N+1)\delta,\\
\left|X_j^xX_i^xX_j^y\sigma_N \mp \frac{1}{2} X_n^xX_j^y\sigma_N\right|&\leq \tilde{d}_s\left((N+1)^3\delta+\frac{1}{8}(N+1)^2\delta^2\right).
\end{align*}
\end{proof}

\bibliography{Exact_Recovery_of_Discrete_Measures_from_Wigner_D_Moments}{}
\bibliographystyle{plain}

\end{document}